\documentclass[11pt,reqno]{amsart}
\usepackage[T1]{fontenc}
\usepackage{amsmath,amssymb,amsthm}
\usepackage[margin=1.15in]{geometry}

\theoremstyle{plain}
\newtheorem{theorem}{Theorem}
\newtheorem{proposition}[theorem]{Proposition}
\newtheorem{lemma}[theorem]{Lemma}
\newtheorem{corollary}[theorem]{Corollary}
\theoremstyle{definition}
\newtheorem{remark}[theorem]{Remark}

\newcommand{\FF}{\mathbb{F}}
\newcommand{\ZZ}{\mathbb{Z}}
\newcommand{\Cm}{\mathfrak{C}}
\newcommand{\M}{\mathfrak{M}}

\DeclareMathOperator{\ord}{ord}
\DeclareMathOperator{\per}{per}
\DeclareMathOperator{\wt}{wt}

\begin{document}

\title[Twists of the Carlitz module with unbounded analytic rank]
{Explicit twists of the Carlitz module and of its\\ tensor powers with unbounded analytic rank}

\author{David Niedbala Giraudin}
\address{Independent researcher, Meaux, France}
\email{}
\thanks{ORCID: 0009-0009-1526-1178}
\subjclass[2020]{Primary 11G09; Secondary 11T55, 05A05}
\keywords{Carlitz module, tensor powers, twists, $L$-functions, analytic rank, Anderson $t$-motives, necklaces}

\begin{abstract}
Grishkov and Logachev have asked (Problem~1.4 of their recent survey) whether, for a
fixed finite field $\FF_q$, the analytic ranks of the twists of the Carlitz module are
bounded. We answer this in the negative for every $q\ge 3$. For $q=p^s$ and $n\ge 1$ put
$P_{q,n}=(\theta^{q^n}-\theta)^{q-2}$. We show that the directed graph of the matrix
$\M(P_{q,n},1,k)$ of Grishkov--Logachev has a completely explicit set of circuits: they are
pairwise disjoint, none of them meets an entry involving $t$, and they are in bijection
with the orbits of $E^n$ under cyclic rotation, where $E=\{\,l\in[0,q-2] : p\nmid l+1\,\}$.
Consequently $L(\Cm_{P_{q,n}},t,T)$ does not depend on $t$, factors as a product of one
term $1-\wt(\nu)T^{\per(\nu)}$ per orbit, and the analytic rank equals
$\sum p^{v_p(\per(\nu))}$, the sum being over the orbits of weight $1$. When $n$ is a power
of $p$ this equals $(q/p)^n(p-1)^{n-1}$, which is unbounded. The argument also answers
questions 3.6.1--3.6.4 of the survey for this family. We then compute the $L$-function for arbitrary
nonzero coefficients on supports of the same shape: taking all coefficients equal to $-1$ multiplies the
rank by $p-1$, and the full support gives twists of degree $(q-2)q^n$ and analytic rank $(q-1)^n$, the
largest rank attainable on such supports. The same method applies to the tensor powers of the Carlitz
module: for $1\le e\le q-2$ we obtain twists of the $e$-th tensor power of analytic rank $(q-e)^n$, and
by a formula of Grishkov and Logachev the same holds for every exponent $e=cq^j$ with $1\le c\le q-2$,
with rank $(q-c)^n$; in particular these analytic ranks are unbounded as well.
\end{abstract}

\maketitle

\section{Introduction}

Let $q=p^{s}$ be a power of a prime, $\theta$ an indeterminate, and $\Cm$ the Carlitz
module over $\FF_q$. Its twists are parametrised by the group
$\FF_q(\theta)^{*}/\FF_q(\theta)^{*(q-1)}$, whose classes are represented by the
polynomials $P\in\FF_q[\theta]$ free from $(q-1)$-st powers; the corresponding twist is
written $\Cm_P$. Its $L$-function satisfies $L(\Cm_P,t,T)\in\FF_q[t][T]$, and the
\emph{analytic rank} of the twist is
\[
   r(\Cm_P)\ :=\ \ord_{T=1}L(\Cm_P,t,T),
\]
the function field analogue of the analytic rank of a quadratic twist of an elliptic curve
over $\mathbb{Q}$. Grishkov and Logachev have studied this invariant in
\cite{GL16,GLZ22,GLZ23,GLZ} and have surveyed the resulting picture in \cite{GL25}, where
they state as the principal open problem:

\begin{quote}
\textbf{Problem 1.4} (\cite{GL25}). \emph{Let $q$ be fixed. Are $r(\Cm_P)$ bounded?}
\end{quote}

They expect the answer to be negative -- "we can expect that it will be possible to prove
that the order of $0$ of these $L$-functions at $1$ is not bounded" \cite{GL16} -- by
analogy with the conjecture that the ranks of quadratic twists of an elliptic curve over
$\mathbb{Q}$ are unbounded, and by analogy with Ulmer's theorem \cite{U} that elliptic
curves over $\FF_p(t)$ have unbounded rank. The largest value recorded in the literature
is $6$, attained at $q=3$ by two squarefree polynomials of degree $39$
(\cite{GL16}, Table 6.11; restated as Example 1.5 of \cite{GL25}). No value of
$r(\Cm_P)$ with $q>3$ appears in \cite{GL16} or \cite{GL25}.

The purpose of this note is to settle Problem 1.4. Fix $q=p^{s}\ge 3$ and $n\ge 1$, set
$N=q^{n}$, and let
\begin{equation}\label{eq:family}
   P_{q,n}\ :=\ \bigl(\theta^{N}-\theta\bigr)^{q-2}\ \in\ \FF_q[\theta].
\end{equation}
Since $\theta^{N}-\theta$ has derivative $-1$ it is squarefree, so every irreducible factor
of $P_{q,n}$ occurs with multiplicity exactly $q-2<q-1$: the polynomial is free from
$(q-1)$-st powers, as required. Put
\[
   E\ :=\ \{\,l\in\{0,1,\dots,q-2\}\ :\ p\nmid l+1\,\},
   \qquad |E|=q-\tfrac{q}{p}\ \ge\ 2 .
\]
The cyclic group $\ZZ/n$ acts on $E^{n}$ by rotation of coordinates; for an orbit $\nu$ we
write $\per(\nu)$ for its cardinality (the period of any of its words) and
\[
   \wt(\nu)\ :=\ \prod_{m=0}^{\per(\nu)-1}\,(w_m+1)\ \in\ \FF_p^{\times},
   \qquad w=(w_0,\dots,w_{n-1})\in\nu ,
\]
which does not depend on the chosen $w\in\nu$.

\begin{theorem}\label{thm:main}
Let $q=p^{s}\ge3$, $n\ge1$, $N=q^{n}$ and $P=P_{q,n}$ as in \eqref{eq:family}. Put
$R=(N-1)/(q-1)=1+q+\dots+q^{n-1}$ and $k=1+(q-2)R$. Then:
\begin{enumerate}
\item[(i)] $\deg P=(q-2)N$ and $k=(\deg P+1)/(q-1)$, an exact division;
\item[(ii)] $L(\Cm_{P},t,T)$ does not depend on $t$, lies in $\FF_p[T]$, and
\[
   L(\Cm_{P},T)\ =\ \prod_{\nu\in E^{n}/(\ZZ/n)}
      \Bigl(1-\wt(\nu)\,T^{\per(\nu)}\Bigr);
\]
\item[(iii)] $\deg_T L(\Cm_P,T)=|E|^{n}$;
\item[(iv)] $\displaystyle r(\Cm_{P})\ =\ \sum_{\substack{\nu\in E^{n}/(\ZZ/n)\\ \wt(\nu)=1}}
   p^{\,v_p(\per(\nu))}$.
\end{enumerate}
\end{theorem}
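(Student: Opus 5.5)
The plan is to compute $L(\Cm_P,t,T)$ through the Grishkov--Logachev description as a characteristic polynomial. I would use $L(\Cm_P,t,T)=\det(1-T\,\M(P,1,k))$, where $\M(P,1,k)$ is the $k\times k$ matrix whose entries are coefficients $c_{qi-j+\text{const}}(P)$, with $t$ occurring only in a boundary row/column. I would expand the determinant by the permutation/circuit expansion: $\det(1-TM)$ is a sum over families of vertex-disjoint circuits of the weighted digraph of $M$, each circuit $C$ contributing $-\wt(C)T^{|C|}$. Hence, if the circuits are pairwise disjoint and avoid the $t$-entries, the determinant factors as $\prod_C(1-\wt(C)T^{|C|})$ and is independent of $t$. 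So everything reduces to determining the support of $M$ and its circuits.

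Part (i) is arithmetic. We have $\deg P=(q-2)N$, and
\[
(q-1)k=(q-1)+(q-2)(N-1)=(q-2)N+1 .
\]
For the support of $P$, expand $P=\sum_{l=0}^{q-2}\binom{q-2}{l}(-1)^{q-2-l}\theta^{Nl+(q-2-l)}$. By Lucas' theorem, since $q-2$ has base-$p$ digits $(p-2,p-1,\dots,p-1)$, we get $\binom{q-2}{l}\equiv(-1)^l(l+1)\pmod p$; equivalently, $\binom{-2}{l}=(-1)^l(l+1)$ and $(1+x)^{q}\equiv 1+x^q$. So the nonzero coefficients of $P$ occur exactly at the exponents $e_l=(q-2)+l(N-1)$ with $l\in E$, and each has value $\pm(l+1)$ with a uniform sign.

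Next I translate the condition ``$M_{ij}\neq0$'' into $qi-j+\text{const}=e_l$ for some $l\in E$. Writing indices in base $q$ with $n$ digits (this is natural since $N=q^n$ and $R=1+q+\dots+q^{n-1}$), one step $j\mapsto i$ should be an affine map. It shifts the digit string by one place, absorbs the digit $l$ at the top (via the $lN$ term), and removes it at the bottom (via the $-l$ term), with the constant $q-2$ and $k=1+(q-2)R$ arranged so that no carries occur. The key claim is the following: a vertex lies on a circuit only if it has the form $j_w=\text{(const)}+\sum_m (\text{digit depending on }w_m)q^m$ with $w\in E^n$, and the edges out of such a vertex go to $j_{\sigma w}$ only, where $\sigma$ is rotation. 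Thus the circuit part of the graph is the functional graph of rotation on $E^n$. Its cycles are exactly the rotation orbits $\nu$, of length $\per(\nu)$. The product of the edge labels around an orbit is $\prod_{m<\per(\nu)}(w_m+1)$, up to signs that I expect to cancel because the sign is uniform and enters with the cycle sign. I must also check that the $t$-entries sit at indices ($0$ or $k-1$) not of the form $j_w$, or that any edge through them cannot close up. This gives (ii), and (iii) follows because $\sum_\nu \per(\nu)=|E|^n$.

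The main obstacle is the claim that the only vertices lying on circuits are the $j_w$, and that each has out-degree one inside this set. Vertices outside $\{j_w\}$ can have edges, but I must show they lead into paths that never return. My first approach would be a monotonicity/potential argument on the digit expansion: a step using a digit $l\notin E$ is impossible (zero coefficient), and a step from a non-$j_w$ index forces a boundary carry that strictly moves the index toward one end of $[0,k-1]$. Finally, for (iv), a factor with $\wt(\nu)\ne1$ does not vanish at $T=1$. If $\wt(\nu)=1$ and $\per(\nu)=p^a m'$ with $p\nmid m'$, then in characteristic $p$ we have $1-T^{\per(\nu)}=(1-T^{m'})^{p^a}$, and $1-T^{m'}$ is separable with a simple root at $1$. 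So this factor contributes exactly $p^{v_p(\per(\nu))}$ to $\ord_{T=1}$.
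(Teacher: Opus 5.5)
There is a genuine gap, and it comes from an incorrect picture of the matrix. In \eqref{eq:M} the entries are $\M_{i,j}=a_{iq-j}\,t-a_{iq-j-1}$, so $t$ is not confined to a boundary row or column. Use the coordinates $x=i-1$, $y=j-1$ and base-$q$ digits. Every vertex $x$ whose leading digit $l=x_{n-1}$ lies in $E$ has a flat edge to $\sigma(x)$, of weight $-c_l$. It also has a $t$-edge to $\sigma(x)+1$, of weight $c_l\,t$, whenever that target lies in $[0,k-1]$. So your key claim that the edges out of a circuit vertex go only to its rotation is false, and the $t$-edges can run inside $E^n$. For $q=5$, $n=1$ one has $k=4$ and $E=\{0,1,2,3\}$, and $\M$ is upper bidiagonal with diagonal $1,2,3,4$ and $t$-entries on the superdiagonal; the vertices $0,1,2$ of $E^n$ all have out-degree $2$. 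The alternative you mention, $t$-entries sitting at the indices $0$ or $k-1$ not of the form $j_w$, also fails: these indices are the words $0\cdots0$ and $(q-2)\cdots(q-2)$, both in $E^n$ and both carrying loops. Consequently the obstacle you single out, vertices outside $E^n$, is the easy part. Along flat edges the map is $\sigma$, so on a flat circuit every digit eventually occupies the leading position and must lie in $E$. The real issue, ruling out circuits that mix flat edges and $t$-edges, is left open. Your sketched ``boundary carry'' potential does not settle it: no carries occur, and the index itself is not monotone under rotation.

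The missing idea is the right potential, the digit sum $\Sigma$. A flat edge $x\mapsto\sigma(x)$ permutes the digits and preserves $\Sigma$. A $t$-edge $x\mapsto\sigma(x)+1$ adds $1$ to the new bottom digit $x_{n-1}\le q-2$ with no carry, so it raises $\Sigma$ by exactly $1$. Hence $\Sigma$ is nondecreasing along walks and strictly increases at $t$-edges, so no circuit contains a $t$-edge. The range computation ($qx-(N-1)l\notin[0,k-1]$ unless $l=x_{n-1}$) shows the flat edges form the partial map $x\mapsto\sigma(x)$. Together these give pairwise disjoint circuits, equal to the $\sigma$-orbits of $E^n$, that avoid every $t$-entry.

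Your sign argument also needs correcting. In $\det(I_k-\M T)$ a circuit $C$ contributes $-w(C)T^{|C|}$ whatever its length, so a uniform sign $\epsilon$ on the edge weights would survive as $\epsilon^{|C|}$ rather than cancel. The correct reason is that the flat entry is $-c_l=(-1)^{q+1}(l+1)$, and $(-1)^{q+1}=1$ in $\FF_p$. Parts (i), (iii), (iv) and your coefficient computation are fine.
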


\begin{corollary}\label{cor:ppower}
If $n$ is a power of $p$ then
\[
   r\bigl(\Cm_{P_{q,n}}\bigr)\ =\ \Bigl(\frac{q}{p}\Bigr)^{\!n}(p-1)^{\,n-1}
   \ =\ \frac{|E|^{n}}{p-1}.
\]
\end{corollary}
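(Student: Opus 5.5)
The plan is to derive the corollary directly from Theorem~\ref{thm:main}(iv) by turning the sum over orbits into a count of words. Assume $n=p^a$. For every orbit $\nu$, $\per(\nu)$ divides $n$, so it is itself a power of $p$. Hence $p^{v_p(\per(\nu))}=\per(\nu)=|\nu|$. Part (iv) therefore becomes
\[
   r(\Cm_{P_{q,n}})=\sum_{\nu:\ \wt(\nu)=1}|\nu|
   =\#\{\,w\in E^n:\ \wt(\text{orbit of }w)=1\,\}.
\]

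Next I will express the condition $\wt=1$ in terms of the whole word $w$, not just one period. Fix $w\in\nu$ and set $\pi=\per(\nu)$. Then $w$ is the concatenation of $n/\pi$ copies of $(w_0,\dots,w_{\pi-1})$. This gives
\[
   \prod_{m=0}^{n-1}(w_m+1)=\wt(\nu)^{n/\pi}.
\]
Here $n/\pi$ is a power of $p$ and $\wt(\nu)\in\FF_p^\times$, and $x\mapsto x^p$ is the identity on $\FF_p$. So the full product equals $\wt(\nu)$. Consequently the rank is the number of $w\in E^n$ with $\prod_{m=0}^{n-1}(w_m+1)=1$ in $\FF_p$.

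Finally I will count these words. Consider the map $E\to\FF_p^\times$ given by $l\mapsto (l+1)\bmod p$. As $l$ runs over $0,\dots,q-2$, the integer $l+1$ runs over $1,\dots,q-1$. Those not divisible by $p$ hit each nonzero residue class mod $p$ exactly $q/p$ times. Thus every fibre of $E\to\FF_p^\times$ has size $q/p$. The count is then $(q/p)^n$ times the number of tuples in $(\FF_p^\times)^n$ with product $1$. That number is $(p-1)^{n-1}$, since the first $n-1$ entries are free and the last is determined. This gives $r=(q/p)^n(p-1)^{n-1}$. The second expression follows from $|E|=q-q/p=(q/p)(p-1)$, so $|E|^n/(p-1)=(q/p)^n(p-1)^{n-1}$.

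There is no real obstacle. The only point needing care is the passage from the period-weight $\wt(\nu)$ to the product over the full word, and that is exactly where the hypothesis that $n$ is a power of $p$ is used, via Frobenius acting trivially on $\FF_p$.
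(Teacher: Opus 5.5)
Your proposal is correct and follows the paper's proof. You reduce (iv) to $r=\sum_{\wt(\nu)=1}\per(\nu)$, replace the period-weight by the product over the whole word using that $p$-power maps fix $\FF_p^{\times}$, and count words with product $1$; the only cosmetic difference is that you do the final count directly ($q/p$ preimages per nonzero residue, $(p-1)^{n-1}$ tuples with product $1$), where the paper uses orthogonality of the characters of $\FF_p^{\times}$.
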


\begin{corollary}\label{cor:unbounded}
For every $q\ge3$ the analytic ranks of the twists of the Carlitz module over $\FF_q$ are
unbounded. Problem 1.4 of \cite{GL25} therefore has a negative answer for every $q$, and
its parts (a) and (b), which were conditional on a positive answer, do not arise.
\end{corollary}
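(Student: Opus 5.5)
The statement to prove is Corollary~\ref{cor:unbounded}. Since the family $P_{q,n}$ is defined for every $q\ge3$ and every $n\ge1$, it suffices to exhibit, for fixed $q$, a sequence of twists whose analytic ranks tend to infinity. I would take $n=p^{j}$ with $j=0,1,2,\dots$ and apply Corollary~\ref{cor:ppower}, which gives
\[
   r\bigl(\Cm_{P_{q,p^j}}\bigr)\;=\;\frac{|E|^{p^j}}{p-1}.
\]
Recall that $|E|=q-q/p\ge 2$ for $q\ge 3$. Indeed, for $p\ge3$ one has $q-q/p\ge q\cdot\tfrac23\ge 2$, and for $p=2$, $q\ge4$ one has $q/2\ge2$. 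Hence the right-hand side is at least $2^{p^j}/(p-1)$, which tends to infinity as $j\to\infty$.

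Before concluding I must check that each $P_{q,n}$ defines a genuine twist. The introduction already shows that $P_{q,n}$ is free from $(q-1)$-st powers: every irreducible factor has multiplicity $q-2<q-1$, and $q-2\ge1$ because $q\ge3$. So $P_{q,n}$ represents a nontrivial class in $\FF_q(\theta)^*/\FF_q(\theta)^{*(q-1)}$. Moreover, distinct $n$ give polynomials of distinct degree $(q-2)q^n$, by Theorem~\ref{thm:main}(i). These therefore give infinitely many twists $\Cm_P$ with $r(\Cm_P)\to\infty$, and the set $\{r(\Cm_P)\}$ is unbounded for every $q\ge3$. This is the negative answer to Problem~1.4. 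Parts (a) and (b) of that problem presuppose boundedness, so they are vacuous.

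There is no real obstacle here: all the content lies in Theorem~\ref{thm:main} and Corollary~\ref{cor:ppower}. The only points needing care are the elementary inequality $|E|\ge2$ and the observation that the hypothesis $q\ge3$ is exactly what makes $q-2\ge1$. For $q=2$ the polynomial $P_{q,n}$ would be trivial, which explains the restriction. If one preferred not to rely on Corollary~\ref{cor:ppower}, Theorem~\ref{thm:main}(iv) would also suffice. Each orbit $\nu$ of the constant word $w=(0,\dots,0)$ has $\per(\nu)=1$ and $\wt(\nu)=1$, but this gives only one orbit, so that route would need more orbits of weight $1$. For that reason I would stick with the $p$-power case.
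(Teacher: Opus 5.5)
Your argument is correct and is the paper's: apply Corollary~\ref{cor:ppower} along $n=p^{j}$ and use $|E|=q-q/p\ge2$. For the distinctness of the twists you compare degrees, which tacitly uses that a polynomial free from $(q-1)$-st powers is the unique such representative of its class; the paper proves this by a valuation argument and then compares irreducible factors. This is harmless, and distinctness is not actually needed for unboundedness, since the ranks themselves differ.
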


Two illustrations. For $q=3$ one has $q-2=1$, so the family is simply
$P=\theta^{3^{n}}-\theta$, and Theorem \ref{thm:main} gives
\[
  r\ =\ 1,\;1,\;4,\;2,\;4,\;16,\;10,\;16,\;256 \qquad (n=1,\dots,9),
\]
against a published maximum of $6$ for that field. For $q=64$, $n=2$ one gets a twist of
degree $253952$ with $k=4031$ and $r=1024$.

The mechanism is a transposition of Ulmer's: the polynomial \eqref{eq:family} is fixed by
the additive group $\ker(\theta^{N}-\theta)$, of order $N=q^{n}$, which is unbounded,
whereas the polynomials of high rank observed in \cite{GL16} are stable under subgroups of
$GL_2(\FF_q)$, a group of bounded order. Section \ref{sec:remarks} explains how the
description obtained here also answers questions 3.6.1--3.6.4 of \cite{GL25} for this
family.

Sections \ref{sec:general}--\ref{sec:questions} extend these results. Theorem \ref{thm:general} computes
$L(\Cm_P,t,T)$ for $P=\sum_{l\in S}b_l\theta^{Nl+q-2-l}$ with arbitrary $S\subseteq\{0,\dots,q-2\}$
containing $q-2$ and arbitrary nonzero $b_l$. Taking all $b_l=-1$ gives rank $|S|^n$ for $n$ a power of $p$,
the largest possible on $S$; with $S=E$ this is $p-1$ times the rank of $P_{q,n}$, and with
$S=\{0,\dots,q-2\}$ it gives twists of degree $(q-2)q^n$ and rank $(q-1)^n$ (Corollary \ref{cor:families}).
For $q=3$, $-(\theta^N+\theta)$ has exactly twice the rank of $\theta^N-\theta$ for every odd $n$ (Theorem
\ref{thm:q3}). Section \ref{sec:tensor} treats the twists of the tensor powers $\Cm^{\otimes e}$: for
$1\le e\le q-2$ the polynomials $\pm\sum_{l=0}^{q-1-e}\theta^{Nl+q-1-e-l}$ give rank $(q-e)^n$ (Theorem
\ref{thm:tensor}), and \cite[Corollary 2.4.5]{GL16} extends this to the exponents $cq^j$ (Corollary
\ref{cor:frob}).

\section{The matrix of Grishkov and Logachev}\label{sec:matrix}

We use \cite{GL16} in the case $n=1$ of its parameter $n$ (the Carlitz module itself, not a
tensor power); to avoid a clash of notation we never use that parameter again. For
$P=\sum_{i=0}^{m}a_i\theta^{i}\in\FF_q[\theta]$ and $k\ge1$, formula (3.1) of \cite{GL16}
defines the $k\times k$ matrix $\M(P,1,k)$ over $\FF_q[t]$ by
\begin{equation}\label{eq:M}
   \M(P,1,k)_{i,j}\ =\ a_{\,iq-j}\,t\ -\ a_{\,iq-j-1},
   \qquad 1\le i,j\le k,
\end{equation}
with the convention $a_{*}=0$ for $*\notin\{0,\dots,m\}$. Theorem 3.3 of \cite{GL16},
a consequence of the Lefschetz trace formula, states:

\begin{theorem}[{\cite[Theorem 3.3]{GL16}}]\label{thm:GL}
$L(\Cm_P,t,T)=\det\bigl(I_k-\M(P,1,k)T\bigr)$ for every $k\ge (m+1)/(q-1)$.
\end{theorem}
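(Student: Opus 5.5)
This statement is quoted from \cite{GL16}, and the paper uses it as a black box. Still, here is the route I would take to reprove it. The idea is to realise $L(\Cm_P,t,T)$ as a Fredholm determinant of a Cartier-type operator, via Anderson's trace formula (equivalently Böckle--Pink's Lefschetz formula for $\tau$-sheaves). I would then show that the polynomials of degree $<k$ form an invariant subspace carrying all of the determinant.

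The first step is to describe the twist as a $\tau$-sheaf on $\mathbb{A}^1_{\FF_q}=\operatorname{Spec}\FF_q[\theta]$ with coefficients in $\FF_q[t]$. The Carlitz module corresponds to the rank-one sheaf $\FF_q[t,\theta]$ with $\tau(f)=(t-\theta)f^{(1)}$, where ${}^{(1)}$ is the $q$-power Frobenius on the $\theta$-variable. The twist by $P$ is obtained after the Kummer extension $\FF_q(\theta)(P^{1/(q-1)})$ by multiplying the Frobenius by the corresponding character. Its Euler factors at the places $\wp\nmid P$ are then $\bigl(1-\chi_P(\wp)\,(t-\text{value})\,T^{\deg\wp}\bigr)^{-1}$ with the appropriate sign conventions; this is an explicit check of local factors. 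After the standard duality, the resulting $L$-function is Anderson's $L$-function of the $\FF_q[t]$-linear ``Cartier'' operator
\[
\Phi\colon f\ \longmapsto\ C\bigl((t-\theta)P\,f\bigr),\qquad C\Bigl(\sum c_i\theta^i\Bigr)=\sum c_{iq-1}\theta^{\,i-1}
\]
(up to the exact normalisation of which residue class mod $q$ is retained). Anderson's trace formula then gives $L(\Cm_P,t,T)=\det(1-\Phi T\mid \FF_q[t][\theta])$, where the determinant is taken in the nuclear sense.

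The second step is the computation of the matrix. I would expand $(t-\theta)P\,\theta^{j-1}=\sum_i (a_{i-j}t-a_{i-j-1})\theta^{i}$ and apply $C$. The coefficient of $\theta^{i-1}$ in $\Phi(\theta^{j-1})$ comes out as $a_{iq-j}t-a_{iq-j-1}$, which is exactly \eqref{eq:M} in the basis $\theta^0,\dots,\theta^{k-1}$.

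For the degree bound, $\deg\Phi(f)\le \lfloor(\deg f+m+2)/q\rfloor-1$. If $\deg f\le k-1$ and $k(q-1)\ge m+1$, this is again $\le k-1$, so the subspace $V_k$ of polynomials of degree $<k$ is $\Phi$-stable. On the quotient, any $f$ of degree $d\ge k$ is sent to degree strictly less than $d$. Hence $\Phi$ is topologically nilpotent on $\FF_q[t][\theta]/V_k$, because it strictly lowers a filtration, and it contributes the factor $1$. Therefore $\det(1-\Phi T)=\det(I_k-\M(P,1,k)T)$. This also shows that the answer is independent of $k$ in the stated range.

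The main obstacle is the first step. One has to match the twist's $L$-function, with its character $\chi_P$ and correct normalisations, precisely to Anderson's operator, including the right power of $P$ and the right residue class in $C$. Near $\infty$ one also needs to know that no spurious factors appear. I would first verify the local factors at a degree-one place $\theta=c$ by comparing the trace of $\Phi$ on $V_k$ with $\sum_c\chi_P(c)\,(t-c)$. Then I would invoke the general trace formula to handle all degrees at once.
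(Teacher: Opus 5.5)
Your proposal is correct and takes the same route as the source the paper relies on: the paper does not reprove this statement but quotes \cite[Theorem 3.3]{GL16}, which is the Anderson/B\"ockle--Pink trace formula for $f\mapsto C\bigl(P(t-\theta)f\bigr)$, evaluated on the nucleus of polynomials of degree $<k$, exactly as you describe. There is one index slip to fix: the coefficient of $\theta^{i}$ in $(t-\theta)P\theta^{j-1}$ is $a_{i-j+1}t-a_{i-j}$, not $a_{i-j}t-a_{i-j-1}$, and only the corrected expansion gives the entry $a_{iq-j}t-a_{iq-j-1}$ after applying your $C$; also, strict degree-lowering on the quotient gives local nilpotence (rather than topological nilpotence), which is the notion Anderson's nucleus lemma requires.
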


We shall always take $k$ equal to $(m+1)/(q-1)$, which for our family is an integer by
Lemma \ref{lem:k} below; the determinant then computes the full $L$-function, the trivial
factor of \cite[4.6]{GL16} included.

We write $\M=tA-B$ with $A_{i,j}=a_{iq-j}$ and $B_{i,j}=a_{iq-j-1}$.

\section{The polynomial and its coefficients}

\begin{lemma}\label{lem:k}
With $m=\deg P_{q,n}=(q-2)N$ one has $(m+1)/(q-1)=1+(q-2)R=k$, an integer, and
$k-1=(q-2)R<N$.
\end{lemma}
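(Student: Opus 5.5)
The plan is a direct computation in three steps: the degree, the identity for $(m+1)/(q-1)$, and the inequality $k-1<N$.

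\textbf{Degree.} The polynomial $\theta^{N}-\theta$ is monic of degree $N$, since $N=q^n\ge 3>1$. Hence its $(q-2)$-nd power is monic of degree $(q-2)N$, which gives $m=(q-2)N$.

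\textbf{The identity for $k$.} I would use $N=1+(q-1)R$, which holds by the definition $R=(N-1)/(q-1)$. Substituting,
\[
m+1=(q-2)\bigl(1+(q-1)R\bigr)+1=(q-1)+(q-2)(q-1)R=(q-1)\bigl(1+(q-2)R\bigr).
\]
Dividing by $q-1$ gives $(m+1)/(q-1)=1+(q-2)R=k$. The division is exact because $R=1+q+\dots+q^{n-1}$ is an integer.

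\textbf{The inequality.} First, $k-1=(q-2)R$ by definition. Next, $(q-2)R<(q-1)R=N-1<N$, where the strict inequality uses $R\ge1$. This shows $k-1<N$.

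I do not expect any step to be an obstacle. The only thing to watch is using the identity $N-1=(q-1)R$ consistently.
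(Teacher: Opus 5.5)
Your proof is correct and follows essentially the same route as the paper. Both use $N-1=(q-1)R$ to write $m+1=(q-1)\bigl(1+(q-2)R\bigr)$, and both bound $k-1=(q-2)R<N-1<N$. The only difference is that you also check explicitly that $\deg P_{q,n}=(q-2)N$, which the paper treats as evident.
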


\begin{proof}
$m+1=(q-2)N+1=(q-2)(N-1)+(q-1)=(q-2)(q-1)R+(q-1)=(q-1)\bigl[(q-2)R+1\bigr]$.
For the inequality, $k-1=(q-2)(N-1)/(q-1)<N-1<N$.
\end{proof}

\begin{lemma}\label{lem:coeff}
Write $s_l=(N-1)l+(q-2)$ for $0\le l\le q-2$. The exponents $s_l$ are pairwise distinct,
$P_{q,n}=\sum_{l=0}^{q-2}c_l\,\theta^{s_l}$ with
$c_l=(-1)^{q-l}\binom{q-2}{l}$, and in $\FF_p$
\[
   c_l\ =\ (-1)^{q}\,(l+1).
\]
In particular $c_l\neq0$ if and only if $l\in E$, and all coefficients of $P_{q,n}$ lie in
the prime field $\FF_p$.
\end{lemma}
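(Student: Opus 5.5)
The plan is to expand $P_{q,n}=(\theta^N-\theta)^{q-2}$ by the binomial theorem and then reduce the coefficients modulo $p$ with an elementary congruence for $\binom{q-2}{l}$.

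\emph{Expansion and exponents.} Write $\theta^N-\theta=\theta(\theta^{N-1}-1)$, so that
\[
P_{q,n}=\theta^{q-2}(\theta^{N-1}-1)^{q-2}=\sum_{l=0}^{q-2}\binom{q-2}{l}(-1)^{q-2-l}\theta^{(N-1)l+q-2}.
\]
This gives $s_l=(N-1)l+(q-2)$ and $c_l=(-1)^{q-2-l}\binom{q-2}{l}=(-1)^{q-l}\binom{q-2}{l}$. Since $N-1\ge q-1>0$, the map $l\mapsto s_l$ is strictly increasing, so the exponents are pairwise distinct and no terms combine. The coefficient of $\theta^{s_l}$ is therefore exactly $c_l$.

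\emph{Reduction modulo $p$.} In $\FF_p$ one has $q-2\equiv -2$, so $\binom{q-2}{l}\equiv\binom{-2}{l}$. This holds because $\binom{x}{l}$ is a polynomial in $x$ with coefficients in $\ZZ_{(p)}$ for $l<p$. For general $l\le q-2$ that argument fails, so I would justify the congruence differently: the identity $(1+X)^{q-2}(1+X)^2=(1+X)^q=1+X^q$ in $\FF_p[X]$ shows that $(1+X)^{q-2}\equiv(1+X)^{-2}\pmod{X^{q}}$ in $\FF_p[[X]]$. Comparing coefficients of $X^l$ for $l\le q-2<q$ gives
\[
\binom{q-2}{l}\equiv(-1)^l(l+1)\quad\text{in }\FF_p .
\]
Hence $c_l=(-1)^{q-l}(-1)^l(l+1)=(-1)^q(l+1)$.

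\emph{Consequences.} From this formula, $c_l\ne0$ exactly when $p\nmid l+1$, that is, when $l\in E$. The coefficients are integers reduced into $\FF_p$, so they lie in the prime field.

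The only delicate point is the congruence $\binom{q-2}{l}\equiv(-1)^l(l+1)$ for all $l\le q-2$, not just $l<p$. The power-series identity above handles it cleanly, and it is the step I would present in most detail.
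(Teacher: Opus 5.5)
Your proposal is correct and matches the paper's proof essentially step for step. It uses the same factorisation $\theta^{N}-\theta=\theta(\theta^{N-1}-1)$, the same binomial expansion, and the same congruence $\binom{q-2}{l}\equiv(-1)^{l}(l+1)$, read off from $(1+X)^{q-2}\equiv(1+X)^{-2}\pmod{X^{q}}$ in $\FF_p[[X]]$; your remark that the naive reduction $q-2\equiv-2$ only works for $l<p$ is right, and the power-series identity is how the paper covers all $l\le q-2$.
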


\begin{proof}
$\theta^{N}-\theta=\theta(\theta^{N-1}-1)$, so
$P_{q,n}=\theta^{q-2}(\theta^{N-1}-1)^{q-2}
 =\sum_{l=0}^{q-2}\binom{q-2}{l}(-1)^{q-2-l}\theta^{(N-1)l+q-2}$,
and $(-1)^{q-2-l}=(-1)^{q-l}$. The exponents are distinct because $N-1>q-2$. Finally, in
$\FF_p[[x]]$,
\[
   (1+x)^{p^{s}-2}=\frac{1+x^{p^{s}}}{(1+x)^{2}}
   \equiv\sum_{l\ge0}(-1)^{l}(l+1)x^{l} \pmod{x^{p^{s}}},
\]
whence $\binom{q-2}{l}\equiv(-1)^{l}(l+1)$ for $0\le l\le q-2$, and
$c_l=(-1)^{q-l}(-1)^{l}(l+1)=(-1)^{q}(l+1)$.
\end{proof}

\section{The directed graph of $\M$}\label{sec:graph}

Throughout this section $P=P_{q,n}$, $k$ is as in Lemma \ref{lem:k}, and we index rows and
columns of $\M=\M(P,1,k)$ by
\[
   x=i-1,\qquad y=j-1,\qquad x,y\in\{0,1,\dots,k-1\}.
\]
By Lemma \ref{lem:k} every such $x$ has an $n$-digit base-$q$ expansion
$x=\sum_{i=0}^{n-1}x_i q^{i}$ with $0\le x_i\le q-1$; we write
$\sigma(x)$ for the integer whose digit string is the cyclic left shift
$(x_{n-2},\dots,x_0,x_{n-1})$ of that of $x$, and $\Sigma(x)=\sum_i x_i$ for the digit sum.

By Lemma \ref{lem:coeff} the entry $\M_{i,j}$ has a nonzero coefficient of $t$ exactly when
$iq-j=s_l$ for some $l\in E$, and a nonzero constant term exactly when $iq-j-1=s_l$ for
some $l\in E$. In the coordinates $x,y$ these read
\begin{equation}\label{eq:edges}
   \text{($t$-edge)}\quad y=qx-(N-1)l+1,
   \qquad\qquad
   \text{(flat edge)}\quad y=qx-(N-1)l .
\end{equation}
We call the first a \emph{$t$-edge}, of weight $c_l\,t$, and the second a \emph{flat edge},
of weight $-c_l$. By Lemma \ref{lem:coeff}, in $\FF_p$,
\begin{equation}\label{eq:weight}
   -c_l\ =\ (-1)^{q+1}(l+1)\ =\ l+1 ,
\end{equation}
the last equality because $q$ is odd when $p$ is odd, and because $-1=1$ when $p=2$.

\begin{lemma}\label{lem:notboth}
No entry of $\M$ has both a nonzero coefficient of $t$ and a nonzero constant term.
\end{lemma}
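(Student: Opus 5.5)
Suppose some entry $(x,y)$ carried both kinds of term. By \eqref{eq:edges} there would be $l,l'\in E$ with
\[
   y=qx-(N-1)l+1 \qquad\text{and}\qquad y=qx-(N-1)l'.
\]
Subtracting, $(N-1)(l'-l)=-1$. This is impossible, because $N-1=q^n-1\ge q-1\ge 2$ and an integer multiple of a number $\ge2$ cannot equal $-1$. Note that $l=l'$ is also excluded, since it would give $0=-1$.

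To make the proof self-contained, I would phrase it directly in terms of the exponents of $P$. The $t$-coefficient of $\M_{i,j}$ is $a_{iq-j}$, and the constant term is $-a_{iq-j-1}$. For both to be nonzero, the two consecutive integers $iq-j-1$ and $iq-j$ would both have to lie in the support $\{s_l\}$ of $P$ given by Lemma \ref{lem:coeff}.

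Distinct exponents satisfy $s_{l'}-s_l=(N-1)(l'-l)$, so any two of them differ by a multiple of $N-1\ge2$. Hence the support contains no two consecutive integers, and that is the whole argument.

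There is no real obstacle here. The only point to check is that $N-1>1$, which holds since $q\ge3$ and $n\ge1$. I would also point out that the argument uses only the shape of the support, not the actual values of the coefficients $c_l$. That observation will matter later, when the method is applied to the general supports $S$ of Theorem \ref{thm:general}.
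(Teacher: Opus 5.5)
Your proposal is correct and is essentially the paper's own argument: equating the $t$-edge and flat-edge equations forces $(N-1)(l-l')=1$, which is impossible since $N-1\ge2$. Your reformulation (the support of $P$ contains no two consecutive integers) is the same observation, and you are right that it uses only the shape of the support, which is what lets the paper reuse this lemma verbatim in Theorem~\ref{thm:general}.
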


\begin{proof}
This would force $(N-1)(l-l')=1$ for some $l,l'\in E$, impossible since $N-1\ge2$.
\end{proof}

\begin{lemma}\label{lem:unique}
Let $x\in\{0,\dots,k-1\}$ and $l\in E$. If $l>x_{n-1}$ then $qx-(N-1)l\le-2$, and if
$l<x_{n-1}$ then $qx-(N-1)l\ge N>k-1$. If $l=x_{n-1}$ then $qx-(N-1)l=\sigma(x)$.
Consequently $x$ has at most one outgoing $t$-edge and at most one outgoing flat edge
inside $\{0,\dots,k-1\}$; both require $l=x_{n-1}\in E$, the flat one landing on
$\sigma(x)$ and the $t$-edge on $\sigma(x)+1$.
\end{lemma}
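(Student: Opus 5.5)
The plan is a direct computation with base-$q$ digits. Write $x=x_{n-1}q^{n-1}+x'$ with $0\le x'<q^{n-1}$. Then $qx=x_{n-1}q^{n}+qx'=x_{n-1}N+qx'$, and therefore
\[
   qx-(N-1)l\ =\ (x_{n-1}-l)N+qx'+l .
\]
The whole argument consists of bounding this expression in the three cases $l>x_{n-1}$, $l<x_{n-1}$ and $l=x_{n-1}$. We use throughout that $0\le qx'\le N-q$ and $0\le l\le q-2$. Note also that $x\le k-1<N$ (Lemma \ref{lem:k}), so $x$ has exactly $n$ digits and $x_{n-1}$ makes sense.

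\emph{Case $l>x_{n-1}$.} Here $x_{n-1}-l\le -1$, so
\[
   qx-(N-1)l\ \le\ -N+(N-q)+l\ =\ l-q\ \le\ -2,
\]
because $l\le q-2$.

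\emph{Case $l<x_{n-1}$.} Here $x_{n-1}-l\ge 1$, so $qx-(N-1)l\ge N+qx'+l\ge N$. By Lemma \ref{lem:k}, $N>k-1$.

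\emph{Case $l=x_{n-1}$.} The expression becomes $qx'+x_{n-1}$. Its base-$q$ digits are $x_{n-1}$ in position $0$, followed by $x_0,\dots,x_{n-2}$ in positions $1,\dots,n-1$. This is exactly the cyclic shift $\sigma(x)$.

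For the consequences, recall the edge equations \eqref{eq:edges}. A flat edge from $x$ lands on $y=qx-(N-1)l$, and a $t$-edge lands on $y=qx-(N-1)l+1$, with $l\in E$ in both cases, and we need $0\le y\le k-1$. If $l>x_{n-1}$, then $y\le -2+1=-1<0$ for both kinds of edge. If $l<x_{n-1}$, then $y\ge N>k-1$ for both. Hence the only possible value is $l=x_{n-1}$, and it must lie in $E$. Since $l$ is then determined by $x$, there is at most one edge of each kind leaving $x$. The flat edge lands on $\sigma(x)$ and the $t$-edge on $\sigma(x)+1$. Whether these targets actually lie in $\{0,\dots,k-1\}$ is not asserted here; the statement only says "at most one".

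The only point needing care is the sharp margin in the first case. The bound $-2$ is what rules out the $t$-edge, whose target is shifted by $+1$, and it relies on $l\le q-2$ rather than $l\le q-1$. Everything else is routine.
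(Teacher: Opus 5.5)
Your proof is correct and follows essentially the same route as the paper: you write $qx-(N-1)l=(x_{n-1}-l)N+qx'+l$ (the paper's $\Delta$ is your $qx'$, with the same bound $0\le qx'\le N-q$), and you treat the three cases $l>x_{n-1}$, $l<x_{n-1}$, $l=x_{n-1}$ exactly as the paper does. You then get the $t$-edge statement by shifting the target by $+1$, which is also how the paper concludes; your remark that the sharp bound $-2$ comes from $l\le q-2$ is precisely the point the paper's estimate uses.
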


\begin{proof}
Writing $qx=x_{n-1}N+\sum_{i=1}^{n-1}x_{i-1}q^{i}$ and $(N-1)l=lN-l$ gives
\[
   qx-(N-1)l=(x_{n-1}-l)N+\Delta+l,
   \qquad \Delta:=\sum_{i=1}^{n-1}x_{i-1}q^{i}\in[0,\,N-q].
\]
If $l\ge x_{n-1}+1$ then the value is at most $-N+(N-q)+(q-2)=-2$. If
$l\le x_{n-1}-1$ then it is at least $N$, and $N>k-1$ by Lemma \ref{lem:k}. If
$l=x_{n-1}$ the value is $\Delta+x_{n-1}=\sigma(x)$. The statement for $t$-edges follows
by adding $1$, which changes $-2$ into $-1$ and $N$ into $N+1$.
\end{proof}

In particular the flat edges define a partial function on $\{0,\dots,k-1\}$, so:

\begin{corollary}\label{cor:disjoint}
The circuits of the flat-edge digraph are pairwise vertex-disjoint.
\end{corollary}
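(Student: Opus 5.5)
The corollary should follow formally from Lemma \ref{lem:unique}, which shows that in the flat-edge digraph on $\{0,\dots,k-1\}$ every vertex has out-degree at most one. Concretely, the only candidate flat edge out of $x$ is $x\to\sigma(x)$, and it exists exactly when $x_{n-1}\in E$ and $\sigma(x)\le k-1$. So the flat edges are the graph of a partial function $f$ on $\{0,\dots,k-1\}$, and the plan is to show that circuits of such a functional digraph cannot meet.

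The argument runs as follows. Let $C$ be a circuit, i.e.\ a closed directed path $v_0\to v_1\to\dots\to v_{r-1}\to v_0$ with distinct vertices, all of whose edges are flat. Each step leaves $v_i$ along its unique outgoing flat edge, so $v_{i+1}=f(v_i)$. Hence $C$ is exactly the forward $f$-orbit of any one of its vertices: starting at $v_i$ and iterating $f$ traverses $C$ and returns to $v_i$.

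Now suppose two circuits $C$ and $C'$ share a vertex $v$. Both equal the forward orbit $\{v,f(v),f^2(v),\dots\}$, with the same cyclic order of vertices and the same edges. Therefore $C=C'$, and distinct circuits are vertex-disjoint.

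No real obstacle is expected. The only point to watch is that the circuits are taken in the flat-edge digraph, not the full digraph of $\M$. Otherwise a $t$-edge $x\to\sigma(x)+1$ would give a second outgoing edge, and the argument would fail. Excluding $t$-edges from circuits is a separate matter, presumably dealt with later in the paper, and is not needed for this statement.
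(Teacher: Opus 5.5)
Your proposal is correct and is the paper's argument: Lemma \ref{lem:unique} makes the flat edges the graph of a partial function, and circuits of a functional digraph are forward orbits, hence pairwise vertex-disjoint. You are also right that excluding $t$-edges from circuits is a separate matter, handled in Lemma \ref{lem:nofront}.
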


\begin{lemma}\label{lem:nofront}
No circuit of the digraph of $\M$ uses a $t$-edge.
\end{lemma}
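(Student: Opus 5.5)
The plan is to find a potential function on the vertex set $\{0,\dots,k-1\}$ that is preserved by every flat edge and strictly increased by every $t$-edge. A circuit returns to its starting vertex, so the total change of the potential around it is zero. Hence a circuit can contain no $t$-edge. The natural candidate is the base-$q$ digit sum $\Sigma$, since the edges are described in Lemma \ref{lem:unique} via the cyclic shift $\sigma$.

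First, I will use Lemma \ref{lem:unique} to list all edges. Any edge leaving $x$ inside $\{0,\dots,k-1\}$ requires $x_{n-1}\in E$. The flat edge goes to $\sigma(x)$ and the $t$-edge goes to $\sigma(x)+1$. For the flat edge, $\Sigma(\sigma(x))=\Sigma(x)$, because $\sigma$ only permutes the $n$ digits of $x$. Every vertex has an $n$-digit expansion by Lemma \ref{lem:k}, so the potential is well defined on all vertices.

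Next, for a $t$-edge, note that the lowest digit of $\sigma(x)$ is $x_{n-1}$. Since $x_{n-1}\in E\subseteq\{0,\dots,q-2\}$, this digit is at most $q-2$. Adding $1$ to $\sigma(x)$ therefore raises only that lowest digit by one, with no carry. So the target $\sigma(x)+1$ has digit string $(x_{n-2},\dots,x_0,x_{n-1}+1)$ and digit sum $\Sigma(x)+1$. (The target is still an $n$-digit number, and it lies in $\{0,\dots,k-1\}$ by hypothesis whenever the edge exists.)

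Finally, along any circuit $x^{(0)}\to x^{(1)}\to\dots\to x^{(r)}=x^{(0)}$, the value of $\Sigma$ increases by exactly the number of $t$-edges used. Since the walk returns to its start, this number must be $0$. The only step needing care is the no-carry observation, which is exactly where the restriction $l=x_{n-1}\le q-2$ from Lemma \ref{lem:unique} enters. I expect no real obstacle beyond stating this cleanly.
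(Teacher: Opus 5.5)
Your proof is correct and is essentially the paper's own argument. The paper likewise uses the digit sum $\Sigma$: flat edges permute digits and so preserve $\Sigma$, and a $t$-edge adds $1$ to the position-$0$ digit $x_{n-1}\le q-2$ without carry, so $\Sigma$ strictly increases along it and a circuit can contain none.
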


\begin{proof}
By Lemma \ref{lem:unique} an edge leaves a vertex $z$ only if $l=z_{n-1}\in E$, so in
particular $z_{n-1}\le q-2$. A flat edge sends $z$ to $\sigma(z)$, which has the same
multiset of digits, hence the same digit sum. A $t$-edge sends $z$ to $\sigma(z)+1$; the
digit of $\sigma(z)$ in position $0$ is $z_{n-1}\le q-2$, so adding $1$ produces the digit
$z_{n-1}+1\le q-1$ with no carry, and $\Sigma$ increases by exactly $1$. Thus $\Sigma$ is
nondecreasing along any walk and strictly increases at every $t$-edge. A circuit returns to
its origin, so it contains no $t$-edge.
\end{proof}

\begin{lemma}\label{lem:necklace}
The circuits of the digraph of $\M$ are exactly the orbits of $E^{n}$ under $\sigma$. A
circuit has length $\per(\nu)$, which divides $n$, and weight $\wt(\nu)$.
\end{lemma}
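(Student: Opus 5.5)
By Lemma \ref{lem:nofront} every circuit of the digraph of $\M$ consists only of flat edges. By Lemma \ref{lem:unique}, a flat edge leaves a vertex $z\in\{0,\dots,k-1\}$ only when $z_{n-1}\in E$, and then it lands on $\sigma(z)$. The plan is to identify the vertices lying on flat circuits, show that these are exactly the words in $E^n$, and then read off the length and the weight.

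\emph{Step 1 (circuits give orbits in $E^n$).} Let $z^{(0)}\to z^{(1)}\to\cdots\to z^{(r)}=z^{(0)}$ be a circuit. Each step is a flat edge, so $z^{(m+1)}=\sigma(z^{(m)})$, which forces $z^{(0)}=\sigma^{r}(z^{(0)})$. Each step also requires the top digit $z^{(m)}_{n-1}$ to lie in $E$. Since $z^{(m)}=\sigma^m(z^{(0)})$, the top digit of $z^{(m)}$ is the digit of $z^{(0)}$ in position $n-1-m \bmod n$. Because $\sigma^r(z^{(0)})=z^{(0)}$, the digit string has period dividing $r$, so as $m$ ranges over $0,\dots,r-1$ every digit position is visited up to periodicity. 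Hence every digit of $z^{(0)}$ lies in $E$, and the vertex set of the circuit is the $\sigma$-orbit of $z^{(0)}$, a word of $E^n$. The circuit is simple, so $r$ is the least $r>0$ with $\sigma^r(z^{(0)})=z^{(0)}$, namely $\per(\nu)$, which divides $n$.

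\emph{Step 2 (every orbit in $E^n$ gives a circuit).} Take $w\in E^n$ and let $x=\sum w_iq^i$. I must check that $x\le k-1=(q-2)R$. This holds because every digit satisfies $w_i\le q-2$, so $x\le (q-2)(1+q+\dots+q^{n-1})=(q-2)R$. Each rotation $\sigma^m(x)$ again has all digits in $E$, so it is also a vertex. By Lemma \ref{lem:unique} a flat edge goes from $\sigma^m(x)$ to $\sigma^{m+1}(x)$, and these edges close up into a circuit of length $\per(\nu)$. This inclusion $E^n\subseteq\{0,\dots,k-1\}$ is the one point needing care. It is exactly why $k=1+(q-2)R$ is the right size: the word $(q-2,\dots,q-2)$ sits at the top index $k-1$.

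\emph{Step 3 (weight).} The edge leaving $z$ uses $l=z_{n-1}$, and by \eqref{eq:weight} it has weight $-c_l=l+1$. Along the circuit, $z_{n-1}$ runs through the digits of $w$ over one period, once each up to rotation. So the product of the edge weights is $\prod_{m=0}^{\per(\nu)-1}(w_m+1)=\wt(\nu)$. Here I use that the word is periodic with period $\per(\nu)$, so the product over the top digits of the orbit equals the product over any $\per(\nu)$ consecutive digits.

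I expect the main obstacle to be the bookkeeping in Step 1, namely showing that the top digits visited cover all digits. The periodicity $\sigma^r(z)=z$ handles this directly.
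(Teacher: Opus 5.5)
Your proof is correct and follows essentially the same route as the paper. Circuits use only flat edges (Lemma \ref{lem:nofront}), the flat map is $\sigma$, and every digit of a circuit vertex reaches the leading position, so it lies in $E$; conversely $E^{n}\subseteq\{0,\dots,k-1\}$ because all digits are at most $q-2$, and the weight is read off from \eqref{eq:weight}. The only difference is that you spell out the periodicity bookkeeping and the fact that the length equals the exact period $\per(\nu)$ because the circuit is simple, both of which the paper leaves implicit.
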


\begin{proof}
By Lemma \ref{lem:nofront} a circuit uses flat edges only, and by Lemma \ref{lem:unique}
the map along it is $\sigma$. Since $\sigma^{n}=\mathrm{id}$ on $n$-digit strings, its
length divides $n$; and every digit of a vertex on the circuit occupies the leading
position at some step, so all its digits lie in $E$.

Conversely let $x\in E^{n}$. All digits of $x$ are at most $q-2$, so
$\sigma(x)\le(q-2)(1+q+\dots+q^{n-1})=(q-2)R=k-1$: the flat edge from $x$ exists and lands
in range, and $\sigma(x)\in E^{n}$. Hence the $\sigma$-orbit of $x$ is a circuit. The
weight is the product of the edge weights, which by \eqref{eq:weight} is
$\prod_{m<\per(\nu)}(w_m+1)=\wt(\nu)$.
\end{proof}

\section{Proof of Theorem \ref{thm:main}}\label{sec:proof}

Part (i) is Lemma \ref{lem:k}. By Theorem \ref{thm:GL} we must compute
$\det(I_k-\M T)$.

Recall the coefficient theorem for digraphs: for a square matrix $\M$ over a commutative
ring,
\[
   \det\bigl(I_k-\M T\bigr)
   =\sum_{\Lambda}(-1)^{c(\Lambda)}\,w(\Lambda)\,T^{\ell(\Lambda)},
\]
the sum being over the linear subgraphs $\Lambda$ of the digraph of $\M$, that is, the
collections of pairwise vertex-disjoint circuits, with $c(\Lambda)$ the number of circuits,
$w(\Lambda)$ the product of the entries along them and $\ell(\Lambda)$ their total length.
By Lemma \ref{lem:nofront} and Corollary \ref{cor:disjoint} the circuits of $\M$ are
pairwise vertex-disjoint, so the linear subgraphs are exactly the subsets of the set of
circuits, and the sum factors:
\[
   \det\bigl(I_k-\M T\bigr)=\prod_{\gamma}\bigl(1-w(\gamma)T^{|\gamma|}\bigr).
\]
By Lemma \ref{lem:necklace} the circuits are the $\sigma$-orbits of $E^{n}$, with lengths
$\per(\nu)$ and weights $\wt(\nu)\in\FF_p^{\times}$. This proves (ii); in particular no
weight involves $t$, so $L$ lies in $\FF_p[T]$ and is independent of $t$. Part (iii)
follows since $\sum_{\nu}\per(\nu)=|E^{n}|=|E|^{n}$.

For (iv) it suffices to compute $\ord_{T=1}\bigl(1-wT^{d}\bigr)$ for $w\in\FF_p^{\times}$.
If $w\neq1$ the value at $T=1$ is $1-w\neq0$ and the order is $0$. If $w=1$, write
$d=p^{a}d'$ with $p\nmid d'$; then $1-T^{d}=(1-T^{d'})^{p^{a}}$, and $T=1$ is a simple root
of $1-T^{d'}$ because the derivative $-d'T^{d'-1}$ takes the nonzero value $-d'$ there.
Hence the order is $p^{a}=p^{v_p(d)}$, and summing over the circuits gives (iv). \qed

\begin{remark}
The word $(q-2,q-2,\dots,q-2)$ is a fixed point of $\sigma$, so it contributes a circuit of
length $1$ and weight $q-1$. In $\FF_p$ one has $q-1=-1$, so this circuit contributes
$1$ to $r$ when $p=2$ and $0$ otherwise. This is exactly the trivial zero of
\cite[Proposition 4.4]{GL16}: its two hypotheses are $m\equiv-1\pmod{q-1}$, which holds
because $m=(q-2)q^{n}\equiv q-2$, and $a_m=-1$, which for $P_{q,n}$ -- whose leading
coefficient is $1$ -- holds precisely in characteristic $2$.
\end{remark}

\section{Proof of the corollaries}

\begin{proof}[Proof of Corollary \ref{cor:ppower}]
Let $n=p^{a}$. Every $d$ dividing $n$ is a power of $p$, so $p^{v_p(d)}=d$ and
Theorem \ref{thm:main}(iv) reads $r=\sum_{\wt(\nu)=1}\per(\nu)$. Moreover, for a word
$w\in\nu$ of period $d$ one has $\prod_{m=0}^{n-1}(w_m+1)=\wt(\nu)^{n/d}$ with $n/d$ a
power of $p$; since raising to a power of $p$ is injective on $\FF_p^{\times}$ (a group of
order $p-1$, prime to $p$), the condition $\wt(\nu)=1$ is equivalent to
$\prod_{m=0}^{n-1}(w_m+1)=1$. Hence
\[
   r=\sum_{\wt(\nu)=1}\per(\nu)
    =\#\Bigl\{\,w\in E^{n}\ :\ \textstyle\prod_{m=0}^{n-1}(w_m+1)=1\ \text{in }\FF_p\,\Bigr\}.
\]
The map $l\mapsto l+1$ is a bijection from $E$ onto
$V=\{v\in[1,q-1]:p\nmid v\}$. Since $p\mid q$, each residue class modulo $p$ contains
exactly $q/p$ elements of $[0,q-1]$; removing $v=0$ leaves exactly $q/p$ elements of $V$ in
each nonzero class, so $|V|=(p-1)q/p=|E|$ and the reduction $V\to\FF_p^{\times}$ is
$(q/p)$-to-one. By orthogonality of the characters of $\FF_p^{\times}$,
\[
   \#\{w : \textstyle\prod(w_m+1)=1\}
   =\frac{1}{p-1}\sum_{\chi}\Bigl(\sum_{v\in V}\chi(v)\Bigr)^{\!n}
   =\frac{1}{p-1}\Bigl(\frac{q}{p}(p-1)\Bigr)^{\!n}
   =\Bigl(\frac{q}{p}\Bigr)^{\!n}(p-1)^{n-1},
\]
because $\sum_{v\in V}\chi(v)=(q/p)\sum_{r\in\FF_p^{\times}}\chi(r)$ vanishes for
$\chi\neq1$.
\end{proof}

\begin{proof}[Proof of Corollary \ref{cor:unbounded}]
Since $q\ge3$ we have $|E|=q-q/p\ge2$, so
$(q/p)^{n}(p-1)^{n-1}=|E|^{n}/(p-1)\to\infty$ as $n\to\infty$ along the powers of $p$; by
Corollary \ref{cor:ppower} the ranks $r(\Cm_{P_{q,p^{a}}})$ are unbounded.

It remains to check that these are pairwise non-isomorphic twists. If $P_1,P_2$ are free
from $(q-1)$-st powers and $P_1/P_2=f^{\,q-1}$ with $f\in\FF_q(\theta)^{*}$, write
$f=g/h$ in lowest terms; then $P_1h^{q-1}=P_2g^{q-1}$, so for every irreducible $\pi$ one
has $v_{\pi}(P_1)\equiv v_{\pi}(P_2)\pmod{q-1}$, and since both valuations lie in
$[0,q-2]$ they are equal. Thus $P_1$ and $P_2$ have the same irreducible factors with the
same multiplicities. Now the irreducible factors of $P_{q,n}$ are exactly the monic
irreducibles of degree dividing $n$, the largest such degree being $n$; hence $P_{q,n}$ and
$P_{q,n'}$ define the same twist only if $n=n'$.
\end{proof}

\section{Values and numerical verification}\label{sec:values}

Table \ref{tab:values} lists the ranks produced by Theorem \ref{thm:main}. Every entry has
been confirmed by an independent computation of
$\ord_{T=1}\det\bigl(I_k-\M(P,1,k)|_{t=t_0}T\bigr)$ from \eqref{eq:M}; the largest cases
are $q=9$, $n=4$ with $k=5741$, and $q=64$, $n=2$ with $k=4031$. The same comparison has
been made for $q=11$ and $q=13$ with $n\le2$. The two computations agree in all $44$ pairs
$(q,n)$, covering characteristics $2,3,5,7,11,13$. The script \texttt{verify\_carlitz.py},
attached as an ancillary file, reproduces this comparison for the $37$ pairs with
$k\le1000$.

Specialising $t$ can only raise the order of vanishing, so each such value is a priori an
upper bound for $r$, and it is exact as soon as $t_0$ avoids the roots of the first
nonvanishing coefficient. For $n\le2$ no choice is needed. Indeed $k=q-1$ and
$k=q^{2}-q-1$ respectively, both less than $q(q-1)$; and since $P_{q,n}$ is stable under
the affine group of $\FF_q$ acting on $\theta$, Corollary 2.4 of \cite{GL16} makes $L$
invariant under the induced action on $t$, so every coefficient of $L$ is a polynomial in
$(t^{q}-t)^{q-1}$ of degree $0$, that is a constant -- independently of
Theorem \ref{thm:main}(ii). Finally, the identity of Theorem \ref{thm:main}(ii) has been
checked coefficient by coefficient against the characteristic polynomial of $\M$ in fifteen
cases spanning characteristics $2,3,5,7,11$, and the predicted degree $\deg_TL=|E|^{n}$ in
all of them.

\begin{table}[ht]
\renewcommand{\arraystretch}{1.15}
\begin{tabular}{r|rrrrrr}
$q\backslash n$ & $1$ & $2$ & $3$ & $4$ & $5$ & $6$\\ \hline
$3$  & $1$ & $1$ & $4$ & $2$ & $4$ & $16$\\
$4$  & $2$ & $4$ & $4$ & $16$ & $8$ & $24$\\
$5$  & $1$ & $2$ & $6$ & $16$ & $256$ & \\
$7$  & $1$ & $3$ & $12$ & $54$ & & \\
$8$  & $4$ & $16$ & $24$ & $256$ & & \\
$9$  & $3$ & $9$ & $108$ & $162$ & & \\
$16$ & $8$ & $64$ & $176$ & & & \\
$25$ & $5$ & $50$ & & & & \\
$27$ & $9$ & $81$ & & & & \\
$32$ & $16$ & $256$ & & & & \\
$64$ & $32$ & $1024$ & & & &
\end{tabular}
\medskip
\caption{$r(\Cm_{P_{q,n}})$. The published maximum over all $q$ is $6$ \cite{GL16,GL25}.}
\label{tab:values}
\end{table}

Two features make the direct check cheap. First, the specialisation $t\mapsto t_0$ can only
raise the order of vanishing, so $r$ is the minimum of $\ord_{T=1}\det(I_k-\M(P,1,k)|_{t_0}T)$
over $t_0$; and since $P_{q,n}$ is stable under the affine group of $\FF_q$ acting on
$\theta$, the coefficients of $L$ in the expansion in powers of $T-1$ are polynomials in
$w=(t^{q}-t)^{q-1}$, of degree at most $k/(q(q-1))$, so $\lfloor k/(q(q-1))\rfloor+1$
values of $w$ suffice. When $k<q(q-1)$ -- which holds for $n\le2$, since
$k=q^{2}-q-1<q(q-1)$ there -- these coefficients are constants and a single $t_0$ settles
the computation. Second, one may instead expand the Euler product,
\[
   L(\Cm_P,t,T)=\sum_{\mathfrak{N}}\bigl((-1)^{m}a_m\bigr)^{\deg\mathfrak{N}}
   \Bigl(\prod_{i}\mathfrak{N}(r_i)\Bigr)\mathfrak{N}(t)\,T^{\deg\mathfrak{N}}
\]
over monic $\mathfrak{N}$, the $r_i$ being the roots of $P$ (\cite[\S7]{GL16}); this route
uses no matrix and no trace formula, and for $q=4$, $n=3$ it confirms
$L=1+T^{8}$ by a cancellation among the $65536$ monic polynomials of degree $8$.

\section{Questions 3.6.1--3.6.4 of the survey}\label{sec:remarks}

Section 3.6 of \cite{GL25} raises four questions about the polynomials of high rank found
at $q=3$. For the family \eqref{eq:family} the description above answers all four.

\emph{3.6.1 (why shift-stable?).} $P_{q,n}$ is shift-stable, since
$(\theta+d)^{N}-(\theta+d)=\theta^{N}-\theta$ for $d\in\FF_q$. More is true and is what
drives the construction: $P_{q,n}$ is fixed by translation by the whole of
$\ker(\theta^{N}-\theta)=\FF_{q^{n}}$, a group of unbounded order, whereas the survey
records only stability under subgroups of $GL_2(\FF_q)$.

\emph{3.6.2 (why are the coefficients of $L$ constants?).} Because no circuit of $\M$ meets
an entry involving $t$ (Lemma \ref{lem:nofront}); this is Theorem \ref{thm:main}(ii).

\emph{3.6.3 (why does $L$ factor into factors of small degree, and does the factorisation
come from a natural partition?).} It does: Theorem \ref{thm:main}(ii) gives one factor per
orbit of $E^{n}$ under rotation, of degree the period of that orbit, hence at most $n$.

\emph{3.6.4 (correlation between $r$ and $r_{\infty}$).} Here both are governed by the same
quantity: $\deg_TL=|E|^{n}$, so $r_{\infty}=k-|E|^{n}$ in the normalisation of \cite{GL25},
while $r\le|E|^{n}$.

Finally, the naive parameter count of \cite[6.4]{GL16} predicts $r\le 2q-3$, and $r\le q$
for values attained by infinitely many $P$. Both bounds fail for \eqref{eq:family}: at
$q=4$ already $r=16$ at $n=4$, and at $q=3$ the ranks are unbounded while $2q-3=3$. This is
consistent with the authors' own doubt that the relevant varieties are complete
intersections.

\section{Arbitrary coefficients and supports}\label{sec:general}

The family \eqref{eq:family} is supported on the exponents $s_l=(N-1)l+q-2$, $l\in E$, and its
coefficients are the binomial coefficients $c_l$ of Lemma \ref{lem:coeff}. In this section we allow
arbitrary nonzero coefficients on arbitrary supports of the same shape. Let
$S\subseteq\{0,\dots,q-2\}$ with $q-2\in S$, let $b=(b_l)_{l\in S}\in(\FF_q^{\times})^{S}$, and put
\[
   P_{S,b}\ :=\ \sum_{l\in S}b_l\,\theta^{\,s_l}\ =\ \sum_{l\in S}b_l\,\theta^{\,Nl+q-2-l}.
\]
For an orbit $\nu$ of $S^{n}$ under cyclic rotation put
\[
   w(\nu)\ :=\ \prod_{m=0}^{\per(\nu)-1}\bigl(-b_{w_m}\bigr)\ \in\ \FF_q^{\times},
   \qquad w=(w_0,\dots,w_{n-1})\in\nu ,
\]
which does not depend on the chosen $w\in\nu$. For $S=E$ and $b_l=c_l$ one has $P_{S,b}=P_{q,n}$ and,
by \eqref{eq:weight}, $w(\nu)=\wt(\nu)$.

\begin{theorem}\label{thm:general}
With this notation,
\[
   L(\Cm_{P_{S,b}},t,T)\ =\ \prod_{\nu\in S^{n}/(\ZZ/n)}\Bigl(1-w(\nu)\,T^{\per(\nu)}\Bigr).
\]
In particular $L(\Cm_{P_{S,b}},t,T)$ does not depend on $t$, lies in $\FF_q[T]$, has degree $|S|^{n}$
in $T$, and
\[
   r(\Cm_{P_{S,b}})\ =\sum_{\substack{\nu\in S^{n}/(\ZZ/n)\\ w(\nu)=1}}p^{\,v_p(\per(\nu))}
   \ \le\ |S|^{n}.
\]
\end{theorem}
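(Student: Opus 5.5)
The plan is to rerun the argument of Sections~\ref{sec:graph}--\ref{sec:proof} verbatim, with $E$ replaced by $S$ and the coefficients $c_l$ replaced by $b_l$. Only the degree, the value of $k$ and the weights need to be rechecked.

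\textbf{Degree and $k$.} Since $q-2\in S$ and $s_l=(N-1)l+q-2$ is increasing in $l$, we get $\deg P_{S,b}=s_{q-2}=(q-2)N$. Lemma~\ref{lem:k} is purely about $m=(q-2)N$, so it applies unchanged: $k=1+(q-2)R=(m+1)/(q-1)$ and $k-1<N$. Theorem~\ref{thm:GL} then gives $L(\Cm_{P_{S,b}},t,T)=\det(I_k-\M T)$ with $\M=\M(P_{S,b},1,k)$.

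\textbf{Edges.} The exponents $s_l$ are pairwise distinct, and the coefficient of $\theta^{s_l}$ is $b_l\ne0$ exactly for $l\in S$. Hence the edge description \eqref{eq:edges} holds with $l\in S$. A $t$-edge has weight $b_lt$ and a flat edge has weight $-b_l$.

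\textbf{Circuits.} Lemma~\ref{lem:notboth} only uses $N-1\ge2$, so it holds as before. Lemma~\ref{lem:unique} is pure digit arithmetic, valid for any $l\in\{0,\dots,q-2\}$. It shows that an edge leaves $x$ only when $l=x_{n-1}\in S$. The flat edge lands on $\sigma(x)$ and the $t$-edge on $\sigma(x)+1$.

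The proof of Lemma~\ref{lem:nofront} needs only $z_{n-1}\le q-2$, which holds because $S\subseteq[0,q-2]$. So $\Sigma$ strictly increases along $t$-edges and circuits are flat. Flat edges form a partial function, so circuits are vertex-disjoint (Corollary~\ref{cor:disjoint}).

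The argument of Lemma~\ref{lem:necklace} then identifies the circuits with the $\sigma$-orbits of $S^n$. Every digit of a vertex on a circuit is leading at some step, so it lies in $S$. Conversely, for $x\in S^n$ all digits are at most $q-2$, so $\sigma(x)\le(q-2)R=k-1$ and the orbit stays in range. The weight of the orbit $\nu$ is $\prod_m(-b_{w_m})=w(\nu)$.

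\textbf{Factorisation and rank.} The coefficient theorem for digraphs factors $\det(I_k-\M T)$ as $\prod_\nu(1-w(\nu)T^{\per(\nu)})$. This shows that $L$ is independent of $t$, has coefficients in $\FF_q$, and has degree $\sum_\nu\per(\nu)=|S|^n$.

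For the order at $T=1$, a factor with $w(\nu)\ne1$ is nonzero at $T=1$. A factor with $w(\nu)=1$ is $(1-T^{d'})^{p^a}$, where $\per(\nu)=p^ad'$ with $p\nmid d'$; it has order exactly $p^{v_p(\per(\nu))}$. This step is the same as in Section~\ref{sec:proof}, since the argument works over $\FF_q$. Summing over orbits gives the formula for $r$.

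\textbf{The bound $r\le|S|^n$.} Since $p^{v_p(\per(\nu))}\le\per(\nu)$, the rank is at most $\sum_\nu\per(\nu)=|S|^n$.

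\textbf{Main obstacle.} The only point needing care is confirming that no step of Section~\ref{sec:graph} used a property of $E$ beyond $E\subseteq[0,q-2]$, or used the specific values $c_l$. Checking this, the only such input is $\max S=q-2$, which fixes $\deg P_{S,b}$ and hence $k$. The hypothesis $q-2\in S$ supplies exactly this.
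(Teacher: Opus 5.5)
Your proposal is correct and is essentially the paper's own proof. You transport the degree and $k$ computation, the edge description with weights $b_l t$ and $-b_l$, the digit-sum argument excluding $t$-edges from circuits, vertex-disjointness via the partial-function property, and the necklace identification from $E$ to $S$, using only $S\subseteq\{0,\dots,q-2\}$ and $q-2\in S$; then you apply the coefficient theorem and the $(1-T^{d'})^{p^{a}}$ computation exactly as in Section~\ref{sec:proof}. The bound $r\le|S|^{n}$ via $p^{v_p(\per(\nu))}\le\per(\nu)$ is the same as the paper's $r\le\deg_T L$.
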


\begin{proof}
Since $q-2\in S$, $\deg P_{S,b}=s_{q-2}=(q-2)N$, so Lemma \ref{lem:k} holds for $P_{S,b}$ with the same
$k$. By \eqref{eq:M} the entry $\M_{i,j}$ has coefficient of $t$ equal to $b_l$ when $iq-j=s_l$, and
constant term $-b_l$ when $iq-j-1=s_l$, $l\in S$: these are the $t$-edges and flat edges
\eqref{eq:edges}, now with $l\in S$ and weights $b_l\,t$ and $-b_l$. The proofs of Lemmas
\ref{lem:notboth}, \ref{lem:unique} and \ref{lem:nofront} and of Corollary \ref{cor:disjoint} use only
$0\le l\le q-2$, so they hold verbatim with $E$ replaced by $S$. The proof of Lemma
\ref{lem:necklace} uses only that the digits of the vertices on a circuit lie in the support and are
at most $q-2$; with $E$ replaced by $S$ it shows that the circuits are the $\sigma$-orbits of $S^{n}$,
of length $\per(\nu)$ and weight $w(\nu)$. The computation of Section \ref{sec:proof} then gives the product
formula, the degree $\sum_\nu\per(\nu)=|S|^{n}$, and the formula for the rank; the bound follows from
$r\le\deg_TL$.
\end{proof}

\begin{corollary}\label{cor:families}
Let $n$ be a power of $p$.
\begin{enumerate}
\item[(i)] If $b_l=-1$ for all $l\in S$, then $w(\nu)=1$ for every $\nu$ and
$r(\Cm_{P_{S,b}})=|S|^{n}=\deg_TL$; no choice of nonzero coefficients on $S$ gives a larger rank.
\item[(ii)] $P^{\mathrm{full}}_{q,n}:=-\sum_{l=0}^{q-2}\theta^{\,Nl+q-2-l}$ has degree $(q-2)q^{n}$ and
$r(\Cm_{P^{\mathrm{full}}_{q,n}})=(q-1)^{n}$.
\item[(iii)] $P^{\mathrm{opt}}_{q,n}:=-\sum_{l\in E}\theta^{\,Nl+q-2-l}$ has the support of $P_{q,n}$ and
$r(\Cm_{P^{\mathrm{opt}}_{q,n}})=|E|^{n}=(p-1)\,r(\Cm_{P_{q,n}})$.
\end{enumerate}
\end{corollary}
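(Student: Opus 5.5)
All three parts will be read off from Theorem~\ref{thm:general} together with the $p$-power argument already used in the proof of Corollary~\ref{cor:ppower}.

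For (i), set $b_l=-1$. Then every factor $-b_{w_m}$ equals $1$, so $w(\nu)=1$ for every orbit $\nu$. Since $n$ is a power of $p$, every period $\per(\nu)$ divides $n$ and is therefore itself a power of $p$, so $p^{v_p(\per(\nu))}=\per(\nu)$. The rank formula of Theorem~\ref{thm:general} then gives
\[
r=\sum_{\nu}\per(\nu)=|S|^{n}=\deg_TL .
\]
Maximality on $S$ is the bound $r\le|S|^n$ in Theorem~\ref{thm:general}, which holds for any nonzero $b$.

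For (ii), take $S=\{0,\dots,q-2\}$, which contains $q-2$, and $b_l=-1$. The degree is $s_{q-2}=(q-2)N=(q-2)q^n$, as noted in the proof of Theorem~\ref{thm:general}, and part (i) gives $r=|S|^n=(q-1)^n$. For (iii), take $S=E$. This set contains $q-2$, since $(q-2)+1=q-1$ is prime to $p$. Its support is that of $P_{q,n}$ by Lemma~\ref{lem:coeff}, and part (i) gives $r=|E|^n$. Corollary~\ref{cor:ppower} gives $r(\Cm_{P_{q,n}})=|E|^n/(p-1)$, which yields the factor $p-1$.

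The only point needing care is checking the hypotheses of Theorem~\ref{thm:general}: that $q-2\in S$ in each case, and that these polynomials are legitimate twist representatives, i.e.\ free from $(q-1)$-st powers. The second point is not actually needed for the $L$-function computation via Theorem~\ref{thm:GL}, but it can be checked directly if desired. Beyond this, the argument is a direct specialization.
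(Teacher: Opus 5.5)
Your proposal is correct and follows essentially the same route as the paper: all weights equal $1$, the divisors of a $p$-power are $p$-powers, so Theorem~\ref{thm:general} gives $r=\sum_\nu\per(\nu)=|S|^n$, maximality is the bound $r\le|S|^n$, and (ii), (iii) are the cases $S=\{0,\dots,q-2\}$ and $S=E$ combined with Corollary~\ref{cor:ppower}. Your explicit check that $q-2\in E$ (because $p\nmid q-1$) is a hypothesis of Theorem~\ref{thm:general} that the paper leaves implicit.
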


\begin{proof}
If $n=p^{a}$, every divisor $d$ of $n$ satisfies $p^{v_p(d)}=d$, so the sum in Theorem
\ref{thm:general} equals $\sum_\nu\per(\nu)=|S|^{n}$ when all weights are $1$; the maximality is the
bound of Theorem \ref{thm:general}. Parts (ii) and (iii) are (i) with $S=\{0,\dots,q-2\}$ and $S=E$, and
Corollary \ref{cor:ppower}.
\end{proof}

For $q$ prime, $E=\{0,\dots,q-2\}$ and the two families of Corollary \ref{cor:families} coincide; for $q=3$
both equal $-(\theta^{N}+\theta)$. Along $n=p^{a}$ the family $P^{\mathrm{full}}_{q,n}$ has degree
$m=(q-2)q^{n}$ and rank $(m/(q-2))^{\log_q(q-1)}$, while $P_{q,n}$ and $P^{\mathrm{opt}}_{q,n}$ grow with
exponent $\log_q|E|=\log_q(q-q/p)$:
\begin{center}
\begin{tabular}{lccccc}
$q$ & $4$ & $8$ & $9$ & $16$ & $27$\\ \hline
$\log_q(q-q/p)$ & $0.5000$ & $0.6667$ & $0.8155$ & $0.7500$ & $0.8770$\\
$\log_q(q-1)$ & $0.7925$ & $0.9358$ & $0.9464$ & $0.9767$ & $0.9885$
\end{tabular}
\end{center}

\begin{remark}\label{rem:trivialzero}
The leading coefficient of $P^{\mathrm{full}}_{q,n}$ and of $P^{\mathrm{opt}}_{q,n}$ is $-1$, so the trivial
zero of \cite[Proposition 4.4]{GL16} occurs in every characteristic; it is the circuit of the fixed point
$(q-2,\dots,q-2)$, of weight $1$.
\end{remark}

\begin{proposition}\label{prop:F}
Let $F=\{\,Nl+r:\ 0\le l\le q-3,\ 0\le r\le q-3-l\,\}$, a set of $(q-1)(q-2)/2$ exponents. For every
$R\in\FF_q[\theta]$ whose monomials have exponents in $F$,
$L(\Cm_{P_{S,b}+R},t,T)=L(\Cm_{P_{S,b}},t,T)$.
\end{proposition}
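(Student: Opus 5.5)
The plan is to rerun the digraph argument of Section~\ref{sec:graph} for $\M(P_{S,b}+R,1,k)$. The aim is to show that the monomials of $R$ add only edges lying on no circuit, and change no entry that lies on a circuit. The coefficient theorem for digraphs then returns the same linear subgraphs with the same weights, so Theorem~\ref{thm:general} gives the same $L$-function.

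\emph{Step 1: same $k$, and entries on circuits unchanged.} The largest exponent in $F$ is at most $N(q-3)+(q-3)<(q-2)N$. Hence $\deg(P_{S,b}+R)=(q-2)N$, and by Theorem~\ref{thm:GL} and Lemma~\ref{lem:k} we may use the same $k$. The exponents in $F$ are disjoint from the exponents $s_l=Nl+q-2-l$, and also from the exponents $s_l+1=Nl+q-1-l$, because an element $Nl+r$ of $F$ has $r\le q-3-l$. So consider an entry $(i,j)$ carrying an old flat edge, that is $iq-j-1=s_l$. Its constant term $a_{iq-j-1}$ is unchanged. Its $t$-coefficient $a_{iq-j}=a_{s_l+1}$ is not touched by $R$; it is $0$ unless $s_l+1$ is itself some $s_{l'}$, which Lemma~\ref{lem:notboth} excludes. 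Thus every entry on an old circuit (these are flat-edge entries by Lemma~\ref{lem:nofront}) keeps its value, and the only modifications are new edges.

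\emph{Step 2: new edges raise the digit sum.} A monomial $\theta^{f}$ with $f=Nl+r\in F$ contributes a $t$-edge where $iq-j=f$ and a flat edge where $iq-j-1=f$. In the coordinates $x,y$ these are $y=qx+q-1-f$ and $y=qx+q-2-f$. With $\Delta=\sum_{i\ge1}x_{i-1}q^{i}$ as in Lemma~\ref{lem:unique}, this gives
\[
  y=(x_{n-1}-l)N+\Delta+c,\qquad c\in\{q-1-r,\;q-2-r\}\subseteq[\,l+1,\;q-1\,].
\]
Since $0\le\Delta+c\le N-1$, the same trichotomy as in Lemma~\ref{lem:unique} shows that $y\in[0,k-1]$ forces $l=x_{n-1}$, and then $y=\Delta+c$. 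The digit of $\Delta$ in position $0$ is $0$ and $c\le q-1$, so there is no carry. Therefore
\[
  \Sigma(y)=\Sigma(x)-x_{n-1}+c\ \ge\ \Sigma(x)-l+l+1=\Sigma(x)+1 .
\]
So every new edge strictly increases $\Sigma$. By the proof of Lemma~\ref{lem:nofront} the old edges never decrease $\Sigma$, so no circuit of the new digraph can contain a new edge.

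\emph{Step 3: conclude.} By Steps~1 and~2, the circuits of the new digraph, with their weights, are exactly those of $\M(P_{S,b},1,k)$. The coefficient theorem therefore yields the identical product, which proves $L(\Cm_{P_{S,b}+R},t,T)=L(\Cm_{P_{S,b}},t,T)$.

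The main obstacle is the bookkeeping in Step~2: checking that the bound $c\ge l+1$ holds on all of $F$, including the flat case $c=q-2-r$, and that $\Delta+c$ stays below $N$. The case $l=0,\ r=0$, where $f=0$ is the constant term, must be rechecked here explicitly. A secondary point is the interaction of new monomials with the entries on old circuits, which Step~1 handles through the disjointness of $F$ from $\{s_l\}\cup\{s_l+1\}$.
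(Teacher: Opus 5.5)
Your proposal is correct and follows essentially the same route as the paper: you use the same decomposition $y=(x_{n-1}-l)N+\Delta+c$ with $c\in[l+1,q-1]$ (the paper's $l+\delta$), the same trichotomy forcing $l=x_{n-1}$, and the same carry-free digit-sum increase showing that no new edge lies on a circuit. Your Step 1, the disjointness of $F$ from $\{s_l\}\cup\{s_l+1\}$, only spells out the paper's brief remark that the new edges never coincide with a flat edge, so the entries and weights on the circuits are unchanged.
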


\begin{proof}
Every exponent in $F$ is smaller than $(q-2)N$, so the degree and $k$ are unchanged. Let $Nl+r\in F$ and
put $j_0=q-2-l-r$, so that $1\le j_0\le q-2-l$. By \eqref{eq:M} the monomial $\theta^{Nl+r}$ contributes to
the entries $(x,y)$ with $y=qx-(N-1)l+\delta$, $\delta\in\{j_0,j_0+1\}$. As in the proof of Lemma
\ref{lem:unique}, $qx-(N-1)l+\delta=(x_{n-1}-l)N+\Delta+l+\delta$; since $l+\delta\le q-1$, this is
negative if $l>x_{n-1}$, at least $N>k-1$ if $l<x_{n-1}$, and equal to $\sigma(x)+\delta$ if $l=x_{n-1}$,
where adding $\delta$ to the digit $x_{n-1}$ in position $0$ produces no carry. Hence every entry to which
$R$ contributes lies on an edge that increases the digit sum $\Sigma$ by $\delta\ge1$. Such edges lie on no
circuit and never coincide with a flat edge, so the circuits of $\M$ and their weights are unchanged, and so
is the determinant by the coefficient theorem of Section \ref{sec:proof}.
\end{proof}

\section{The case $q=3$}\label{sec:q3}

For $q=3$, $P_{S,b}=c_0\theta+c_1\theta^{N}$ with $c_0,c_1\in\FF_3^{\times}$; the family \eqref{eq:family} is
$\theta^{N}-\theta$ and $P^{\mathrm{full}}_{3,n}=P^{\mathrm{opt}}_{3,n}=-(\theta^{N}+\theta)$. For a binary
orbit $\nu$ let $\omega(\nu)$ be the number of letters $1$ in a period, $M_2(d)$ the number of binary orbits of
exact period $d$, and $M_2^{\mathrm{ev}}(d)$ the number of those with $\omega$ even.

\begin{theorem}\label{thm:q3}
Let $q=3$ and $n\ge1$. Then
\[
   r(\Cm_{-(\theta^{N}+\theta)})=\sum_{d\mid n}M_2(d)\,3^{v_3(d)},\qquad
   r(\Cm_{\theta^{N}-\theta})=\sum_{d\mid n}M_2^{\mathrm{ev}}(d)\,3^{v_3(d)} .
\]
If $n$ is odd, $r(\Cm_{-(\theta^{N}+\theta)})=2\,r(\Cm_{\theta^{N}-\theta})$. If $n$ is a power of $3$, the two
ranks are $2^{n}$ and $2^{n-1}$.
\end{theorem}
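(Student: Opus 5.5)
We specialise Theorem \ref{thm:general} to $q=3$, so $p=3$. Here $\{0,1\}$ is the only admissible support containing $q-2=1$, and both polynomials are of the form $P_{S,b}$ with $S=\{0,1\}$, $s_0=1$ and $s_1=N$.

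For $-(\theta^N+\theta)$ we have $b_0=b_1=-1$. Every letter then has weight $-b_l=1$, so $w(\nu)=1$ for every binary orbit $\nu$.

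For $\theta^N-\theta$ we have $b_1=1$ and $b_0=-1$. Letter $1$ has weight $-1$ and letter $0$ has weight $1$, so $w(\nu)=(-1)^{\omega(\nu)}$. Since $-1\ne1$ in $\FF_3$, the condition $w(\nu)=1$ holds exactly when $\omega(\nu)$ is even. The same conclusion follows from \eqref{eq:weight}, because $E=\{0,1\}$ and the letter weights $l+1$ are $1$ and $2=-1$.

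Next we group the orbits of $\{0,1\}^n$ by their exact period $d$, which divides $n$. Rotation orbits of $\{0,1\}^n$ of period $d$ correspond bijectively to binary orbits of exact period $d$, namely to the primitive necklaces of length $d$: the correspondence repeats a primitive word $n/d$ times. The quantity $\omega(\nu)$ is computed over one period, so it is preserved by this correspondence. The rank formula of Theorem \ref{thm:general} then gives the two displayed sums directly.

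For odd $n$, complementation $0\leftrightarrow1$ maps primitive words of length $d$ to primitive words, commutes with rotation, and sends $\omega$ to $d-\omega$. Every $d\mid n$ is odd, so complementation swaps the parity of $\omega$. It is therefore a bijection between orbits of period $d$ with $\omega$ even and those with $\omega$ odd. This gives $M_2(d)=2M_2^{\mathrm{ev}}(d)$ for each $d\mid n$, and summing over $d$ yields the factor $2$.

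Finally, suppose $n=3^a$. Then $3^{v_3(d)}=d$ for every $d\mid n$, so
\[
r(\Cm_{-(\theta^N+\theta)})=\sum_{d\mid n}dM_2(d)=2^n,
\]
since $\sum_\nu\per(\nu)$ counts all words of $\{0,1\}^n$. For the other polynomial we could invoke Corollary \ref{cor:ppower} with $q=p=3$, which gives $|E|^n/(p-1)=2^{n-1}$. More directly, $n$ is odd, so the previous paragraph halves $2^n$.

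The only point needing care is the passage from orbits of $\{0,1\}^n$ to the counts $M_2(d)$, in particular that $\omega$ is measured over a single period. The rest is bookkeeping on top of Theorem \ref{thm:general}.
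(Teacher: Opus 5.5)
Your proof is correct and follows the paper's argument essentially step for step: you specialise Theorem \ref{thm:general} to $S=\{0,1\}$ with weights $1$ and $(-1)^{\omega(\nu)}$, group the orbits by exact period, use complementation for odd $d$, and sum $\sum_{d\mid n}dM_2(d)=2^{n}$ when $n$ is a power of $3$. One harmless slip: $\{0,1\}$ is not the only admissible support containing $1$, since $\{1\}$ also is, but $\{0,1\}$ is the support of both polynomials, which is all you need.
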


\begin{proof}
For $c_0=c_1=-1$ every weight is $1$; for $c_0=-1$, $c_1=1$ the weight is $(-1)^{\omega(\nu)}$. The two
formulas follow from Theorem \ref{thm:general}. For $d$ odd, complementing all letters is an involution of
$\{0,1\}^{d}$ commuting with rotation, preserving exact periods and replacing $\omega$ by $d-\omega$, of the
other parity; hence $M_2^{\mathrm{ev}}(d)=M_2(d)/2$. If $n$ is odd all its divisors are odd, which gives the
factor $2$; if $n$ is a power of $3$ the first sum is $\sum_{d\mid n}dM_2(d)=2^{n}$.
\end{proof}

For $n=1,\dots,13$ the rank of $-(\theta^{N}+\theta)$ is
$2,3,8,6,8,36,20,36,512,108,188,1044,632$, and that of $\theta^{N}-\theta$ is
$1,1,4,2,4,16,10,16,256,52,94,512,316$, in agreement with Table \ref{tab:values} for $n\le6$. For $n=27$ the
two ranks are $2^{27}=134\,217\,728$ and $2^{26}$.

\section{Tensor powers}\label{sec:tensor}

For $e\ge1$ let $\Cm^{\otimes e}_P$ be the twist by $P$ of the $e$-th tensor power of the Carlitz module, that
is the module denoted $\Cm^{e}_P$ in \cite{GL16}, whose $Q$ is $P\,(t-\theta)^{e}$. For
$P=\sum_{z=0}^{m}a_z\theta^{z}$ with $m\equiv-e\pmod{q-1}$, formula (3.1) and Theorem 3.3 of \cite{GL16}
give, for every $k\ge(m+e)/(q-1)$,
\begin{equation}\label{eq:Me}
   L(\Cm^{\otimes e}_P,t,T)=\det\bigl(I_k-\M(P,e,k)\,T\bigr),\qquad
   \M(P,e,k)_{i,j}=\sum_{\lambda=0}^{e}(-1)^{\lambda}\binom{e}{\lambda}a_{\,iq-j-\lambda}\,t^{\,e-\lambda},
\end{equation}
and the analytic rank $r(\Cm^{\otimes e}_P)$ is the order of vanishing of this $L$-function at $T=1$
\cite[Definition 4.1]{GL16}. For $e=1$ this is \eqref{eq:M} and Theorem \ref{thm:GL}.

Let $1\le e\le q-2$ and put $c_e=q-1-e\ge1$. Let $S\subseteq\{0,\dots,c_e\}$ with $c_e\in S$, let
$b\in(\FF_q^{\times})^{S}$, and put
\[
   P^{(e)}_{S,b}\ :=\ \sum_{l\in S}b_l\,\theta^{\,Nl+c_e-l},\qquad
   w_e(\nu)\ :=\ \prod_{m=0}^{\per(\nu)-1}\bigl((-1)^{e}b_{w_m}\bigr)
\]
for an orbit $\nu$ of $S^{n}$ and $w\in\nu$. Its degree is $c_eN\equiv c_e\equiv-e\pmod{q-1}$, as required.
For $e=1$ one has $c_1=q-2$, $P^{(1)}_{S,b}=P_{S,b}$ and $w_1=w$.

\begin{theorem}\label{thm:tensor}
With this notation,
\[
   L(\Cm^{\otimes e}_{P^{(e)}_{S,b}},t,T)\ =\ \prod_{\nu\in S^{n}/(\ZZ/n)}\Bigl(1-w_e(\nu)\,T^{\per(\nu)}\Bigr);
\]
it does not depend on $t$ and has degree $|S|^{n}$ in $T$, and
$r(\Cm^{\otimes e}_{P^{(e)}_{S,b}})=\sum_{w_e(\nu)=1}p^{v_p(\per(\nu))}\le|S|^{n}$. If $b_l=(-1)^{e}$ for all
$l\in S$ and $n$ is a power of $p$, the rank equals $|S|^{n}$. In particular, for $n$ a power of $p$,
\[
   P^{(e)}_{q,n}\ :=\ (-1)^{e}\sum_{l=0}^{c_e}\theta^{\,Nl+c_e-l}
\]
has degree $(q-1-e)q^{n}$ and $r(\Cm^{\otimes e}_{P^{(e)}_{q,n}})=(q-e)^{n}$.
\end{theorem}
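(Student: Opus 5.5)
The plan is to repeat the digraph argument of Section \ref{sec:graph} for the matrix $\M(P,e,k)$ of \eqref{eq:Me}, with $P=P^{(e)}_{S,b}$ and $k=(m+e)/(q-1)$, where $m=c_eN$. First, $k$ is an integer and $k-1<N$. Writing $N=(q-1)R+1$ gives
\[
m+e=c_e(q-1)R+(q-1),
\]
so $k=c_eR+1$ and $k-1=c_eR\le(q-2)R<N$. Hence every row or column index $x=i-1$ again has an $n$-digit base-$q$ expansion. The exponents are $s_l=Nl+c_e-l=(N-1)l+c_e$ for $l\in S$. The entry $(x,y)$ receives the term $(-1)^{\lambda}\binom{e}{\lambda}b_l\,t^{e-\lambda}$ exactly when
\[
y=qx-(N-1)l+(q-1-c_e)-\lambda=qx-(N-1)l+e-\lambda .
\]
Put $\delta=e-\lambda\in\{0,\dots,e\}$. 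Each term is then an edge $x\to qx-(N-1)l+\delta$ carrying weight $(-1)^{e-\delta}\binom{e}{\delta}b_l\,t^{\delta}$. Distinct pairs $(l,\delta)$ give distinct targets, since $(N-1)(l-l')=\delta-\delta'$ with $|\delta-\delta'|\le e<N-1$ forces $l=l'$. So no cancellation occurs between different terms in a single entry.

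Next comes the analogue of Lemma \ref{lem:unique}. Write $qx-(N-1)l+\delta=(x_{n-1}-l)N+\Delta+l+\delta$ with $\Delta\in[0,N-q]$. The key inequality is $l+\delta\le c_e+e=q-1$.
\begin{itemize}
\item If $l>x_{n-1}$, the value is at most $-N+N-q+q-1<0$.
\item If $l<x_{n-1}$, the value is at least $N>k-1$.
\item If $l=x_{n-1}$, the value is $\sigma(x)+\delta$, and adding $\delta$ to the digit $x_{n-1}$ in position $0$ produces $x_{n-1}+\delta\le q-1$, so there is no carry.
\end{itemize}
Thus every edge with $\delta\ge1$ raises the digit sum $\Sigma$ by exactly $\delta$, while flat edges ($\delta=0$) preserve $\Sigma$ and form the partial function $\sigma$. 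As in Lemma \ref{lem:nofront}, circuits use only $\delta=0$ edges, and these carry weight $(-1)^{e}b_l$ with no $t$. As in Corollary \ref{cor:disjoint}, the circuits are pairwise vertex-disjoint.

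The circuits are exactly the $\sigma$-orbits of $S^n$. Every digit of a vertex on a circuit eventually occupies the leading position, so it must lie in $S$. Conversely, a word in $S^n$ has digits at most $c_e$, so $\sigma(x)\le c_eR=k-1$ and the orbit stays in range. The weight of an orbit is $w_e(\nu)$.

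The coefficient theorem and the order computation of Section \ref{sec:proof} then give the product formula, the degree $|S|^n$, and the rank formula with the bound $r\le|S|^n$. If $b_l=(-1)^e$, every edge weight is $(-1)^{2e}=1$, so all $w_e(\nu)=1$. When $n$ is a power of $p$ we have $p^{v_p(\per)}=\per$, so the rank is $\sum_\nu\per(\nu)=|S|^n$. Taking $S=\{0,\dots,c_e\}$ gives $|S|=q-e$ and degree $c_eq^n$.

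The main point to verify is the bound $l+\delta\le q-1$. It is what prevents carries and keeps every $t$-bearing edge off the circuits. It is exactly where the hypothesis $e\le q-2$ enters, through $S\subseteq\{0,\dots,q-1-e\}$, together with the choice of exponent shape $Nl+c_e-l$.
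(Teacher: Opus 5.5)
Your proposal is correct and follows essentially the same route as the paper. Both arguments compute $k=c_eR+1<N+1$, read each term of \eqref{eq:Me} as an edge $x\to qx-(N-1)l+\delta$ with $\delta=e-\lambda$, use $l+\delta\le c_e+e=q-1$ to rule out carries so that the digit sum is monotone along edges, and then rerun Lemmas \ref{lem:nofront} and \ref{lem:necklace} and the coefficient theorem of Section \ref{sec:proof}; the paper additionally notes that a single term may vanish when $\binom{e}{\delta}\equiv0\pmod p$, which is harmless since only $\delta\ge1$ edges can vanish and these lie on no circuit.
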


\begin{proof}
Write $P=P^{(e)}_{S,b}$ and $z_l=(N-1)l+c_e$. Then $m=c_eN$ and $m+e=c_e(N-1)+(q-1)$, so
$(m+e)/(q-1)=c_eR+1$, with $R$ as in Theorem \ref{thm:main}. We use \eqref{eq:Me} with $k=c_eR+1$ and index
rows and columns by $x=i-1$, $y=j-1\in\{0,\dots,c_eR\}$; the integer $c_eR$ is the $n$-digit word all of whose
digits are $c_e$, so every vertex $x$ has leading digit $x_{n-1}\le c_e$. The monomial $b_l\theta^{z_l}$ occurs
in the entry $(x,y)$ through the index $\lambda$ when $q(x+1)-(y+1)-\lambda=z_l$, that is
$y=qx-(N-1)l+\delta$ with $\delta=e-\lambda\in\{0,\dots,e\}$, and contributes
$(-1)^{e-\delta}\binom{e}{\delta}b_l\,t^{\delta}$. Since $N-1>e$, distinct pairs $(l,\delta)$ give distinct
$y$, so every entry is a single monomial, possibly zero.

As in the proof of Lemma \ref{lem:unique}, $qx-(N-1)l+\delta=(x_{n-1}-l)N+\Delta+l+\delta$. If $l>x_{n-1}$
this is at most $-N+(N-q)+c_e+e=-1$; if $l<x_{n-1}$ it is at least $N>c_eR$; and if $l=x_{n-1}$ it equals
$\sigma(x)+\delta$, where adding $\delta$ to the digit $x_{n-1}$ in position $0$ produces no carry because
$x_{n-1}+\delta\le c_e+e=q-1$. Hence every edge raises $\Sigma$ by its $\delta$, the edges with $\delta=0$ go
from $x$ to $\sigma(x)$, exist only if $x_{n-1}\in S$, and carry the constant weight $(-1)^{e}b_{x_{n-1}}$.
Since $S\subseteq\{0,\dots,c_e\}$, every rotation of a word with all digits in $S$ is at most $c_eR$. The
arguments of Lemmas \ref{lem:nofront} and \ref{lem:necklace} and of Section \ref{sec:proof} now apply word for word, with
$q-2$ replaced by $c_e$ and the flat weights replaced by $(-1)^{e}b_l$. This gives the product formula, the
independence of $t$, the degree $|S|^{n}$ and the rank formula; the last statements follow as in Corollary
\ref{cor:families}, with $|\{0,\dots,c_e\}|=q-e$.
\end{proof}

For $q=4$ and $e=2$ one gets $P^{(2)}_{4,n}=\theta^{N}+\theta$, of rank $2^{n}$ for the twist of
$\Cm^{\otimes2}$ when $n$ is a power of $2$; for every $q$ and $e=q-2$ one gets $(-1)^{q}(\theta^{N}+\theta)$
and the rank $2^{n}$. The construction requires $c_e\ge1$, that is $e\le q-2$.

By \cite[Corollary 2.4.5]{GL16}, $L(\Cm^{\otimes q^{j}e}_P,t,T)=L(\Cm^{\otimes e}_P,t^{q^{j}},T)$ for every
$j\ge0$. Since $q^{j}e\equiv e\pmod{q-1}$, the polynomials $P^{(e)}_{S,b}$ also satisfy the degree condition for
the exponent $q^{j}e$.

\begin{corollary}\label{cor:frob}
Let $1\le c\le q-2$ and $j\ge0$. Then
$L(\Cm^{\otimes cq^{j}}_{P^{(c)}_{S,b}},t,T)=L(\Cm^{\otimes c}_{P^{(c)}_{S,b}},t,T)$, and the conclusions of
Theorem \ref{thm:tensor} hold for the exponent $cq^{j}$. In particular the analytic ranks of the twists of
$\Cm^{\otimes cq^{j}}$ are unbounded.
\end{corollary}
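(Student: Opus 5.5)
The plan is to combine the Frobenius-twist identity \cite[Corollary 2.4.5]{GL16}, quoted just before the statement, with Theorem~\ref{thm:tensor}. By that identity, for $P=P^{(c)}_{S,b}$,
\[
   L(\Cm^{\otimes cq^{j}}_{P},t,T)=L(\Cm^{\otimes c}_{P},t^{q^{j}},T).
\]
By Theorem~\ref{thm:tensor}, applied with $e=c$ (admissible since $1\le c\le q-2$), the right-hand side is $\prod_{\nu}(1-w_c(\nu)T^{\per(\nu)})$. This expression does not involve $t$, so substituting $t\mapsto t^{q^{j}}$ leaves it unchanged, and we get the first claimed equality
\[
   L(\Cm^{\otimes cq^{j}}_{P},t,T)=L(\Cm^{\otimes c}_{P},t,T).
\]

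Before applying the identity, I will check that it is legitimate for this $P$. Lemma-style, the $L$-function of $\Cm^{\otimes cq^j}_P$ is defined through \eqref{eq:Me} only when $\deg P\equiv -cq^j\pmod{q-1}$. This holds because $q^j\equiv1$, as remarked before the corollary. The relevant matrix size $k\ge(m+cq^j)/(q-1)$ is larger, but Theorem 3.3 of \cite{GL16} allows any sufficiently large $k$, so nothing changes. If the cited corollary in \cite{GL16} carries further hypotheses (for instance on the form of $P$, or the normalization of the trivial factor), I would verify that they are met. I expect this bookkeeping of conventions to be the only genuine obstacle: both sides must refer to the same full $L$-function, with the trivial factor included, as in our choice of $k$.

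Once the $L$-functions coincide, all conclusions of Theorem~\ref{thm:tensor} transfer verbatim:
\begin{itemize}
\item the product formula,
\item the independence of $t$,
\item the degree $|S|^n$,
\item the rank formula $\sum_{w_c(\nu)=1}p^{v_p(\per(\nu))}$.
\end{itemize}
The analytic rank of $\Cm^{\otimes cq^j}_P$ is by definition the order at $T=1$ of this same polynomial.

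For unboundedness, take $S=\{0,\dots,c_c\}$, $b_l=(-1)^c$ and $n=p^a$. Theorem~\ref{thm:tensor} then gives rank $(q-c)^{n}$. Since $c\le q-2$, we have $q-c\ge2$, so this tends to infinity as $a\to\infty$, for each fixed $c$ and $j$. That the twists for different $n$ are genuinely distinct follows as in the proof of Corollary~\ref{cor:unbounded}. It is not even needed, since distinct rank values already force distinct twists.
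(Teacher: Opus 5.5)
Your proposal is correct and follows the paper's own proof. You apply \cite[Corollary 2.4.5]{GL16} to get $L(\Cm^{\otimes cq^{j}}_{P},t,T)=L(\Cm^{\otimes c}_{P},t^{q^{j}},T)$, then observe that the right-hand side is independent of $t$ by Theorem~\ref{thm:tensor}, so the substitution $t\mapsto t^{q^{j}}$ changes nothing. Your extra remarks (the degree congruence $q^{j}c\equiv c \pmod{q-1}$, and unboundedness via $S=\{0,\dots,q-1-c\}$, $b_l=(-1)^{c}$, $n=p^{a}$) agree with what the paper states around the corollary.
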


\begin{proof}
By Theorem \ref{thm:tensor}, $L(\Cm^{\otimes c}_{P^{(c)}_{S,b}},t,T)$ does not depend on $t$, so the substitution
$t\mapsto t^{q^{j}}$ leaves it unchanged.
\end{proof}

For $q=3$ these exponents are $e=3^{j}$; the smallest exponent not covered is $e=2$.

\begin{remark}\label{rem:free}
The polynomials $P^{(e)}_{q,n}$ are free from $(q-1)$-st powers, so they are reduced representatives of their
twist classes, and those with different $n$ define different twists since their ranks differ. Indeed, with
$c=c_e$ and $u=\theta^{N-1}$,
\[
   P^{(e)}_{q,n}=(-1)^{e}\,\theta^{c}\sum_{l=0}^{c}u^{l}
   =(-1)^{e}\,\theta^{c}\,\frac{\theta^{(N-1)(q-e)}-1}{\theta^{N-1}-1}.
\]
The factor $\theta$ has multiplicity $c\le q-2$. Write $q-e=p^{a}m'$ with $p\nmid m'$; then
$\theta^{(N-1)(q-e)}-1=(\theta^{(N-1)m'}-1)^{p^{a}}$, and $\theta^{(N-1)m'}-1$ is separable since
$p\nmid(N-1)m'$, so every other irreducible factor has multiplicity at most $p^{a}$. As $p^{a}$ divides
$q-e\le q-1$ and $p\nmid q-1$, we have $p^{a}\le q-2$. For $e=1$ this applies to
$P^{\mathrm{full}}_{q,n}=P^{(1)}_{q,n}$.
\end{remark}

\section{Further numerical verification}\label{sec:verif2}

The statements of Sections \ref{sec:general}--\ref{sec:tensor} are proved. As an independent check, the
matrices \eqref{eq:M} and \eqref{eq:Me} were built over $\FF_p$ for every $t_0\in\FF_p$, and $\deg_TL$ and $r$
were computed from them and compared with the values predicted from the orbits and their weights, which do not
use the matrix. They agree in all $271$ comparisons: $57$ for the families $P_{q,n}$,
$P^{\mathrm{opt}}_{q,n}$, $P^{\mathrm{full}}_{q,n}$ with
$(q,n)\in\{3\}\times\{1,\dots,5\}\cup\{4\}\times\{1,\dots,4\}\cup\{5\}\times\{1,2,3\}\cup\{7,8\}\times\{1,2\}
\cup\{9\}\times\{1,2,3\}$ and $k\le638$; $12$ for $P^{\mathrm{full}}_{q,n}+R$ with $R$ supported on $F$ and
random coefficients; $136$ for the tensor powers, with $q\in\{3,4,5,7,8,9\}$, all $1\le e\le q-2$,
$n\in\{1,2,3\}$, the polynomials $P^{(e)}_{q,n}$ and polynomials $P^{(e)}_{S,b}$ with random support and
coefficients, and $k\le400$; and $66$ for the exponents $cq^{j}$ of Corollary \ref{cor:frob}, $q\in\{3,4,5,7\}$,
$j\in\{1,2\}$, $k\le300$. The ancillary file \texttt{verify\_ranks.py} reproduces these comparisons.
Examples of $(\deg_TL,r)$: $(512,512)$ for $P^{\mathrm{full}}_{9,3}$ ($k=638$), $(81,81)$ for
$P^{\mathrm{full}}_{4,4}$ ($k=171$), $(216,216)$ for $P^{\mathrm{opt}}_{9,3}$, and $(125,125)$ for
$P^{(4)}_{9,3}$ and the twist of $\Cm^{\otimes4}$ ($k=365$).

\section{Questions}\label{sec:questions}

\begin{enumerate}
\item[(1)] Is $(q-1)^{n}$ the largest analytic rank of a twist $\Cm_P$ with $\deg P\le(q-2)q^{n}$, for $n$ a
power of $p$?
\item[(2)] Are the analytic ranks of the twists of $\Cm^{\otimes e}$ bounded when $e$ is not of the form $cq^{j}$
with $1\le c\le q-2$? The smallest case is $q=3$, $e=2$.
\end{enumerate}

\emph{Computations related to (1).} For $(q,n)=(3,1)$ and $(4,1)$ an exhaustive computation over all nonzero $P$
of degree $\le(q-2)q^{n}$ gives the maximal analytic rank $2$ and $3$ respectively, that is $(q-1)^{n}$. For $q=4$
the polynomials of degree $\le8$ of rank $3$ are exactly the $64$ polynomials
$\theta^{8}+\theta^{5}+\theta^{2}+c_4\theta^{4}+c_1\theta+c_0$ with $c_0,c_1,c_4\in\FF_4$, that is the
polynomials $P^{\mathrm{full}}_{4,1}+R$ of Proposition \ref{prop:F}, here with $F=\{0,1,4\}$; the ancillary file
\texttt{exhaustive\_q4.py} reproduces this computation. For $(q,n)=(3,3)$, among the $1568$ polynomials obtained
from $-(\theta^{27}+\theta)$ by changing one or two coefficients, the largest rank is $8$, attained only by the two
polynomials that differ from it in the constant term alone ($F=\{0\}$ for $q=3$); and none of $6\cdot10^{4}$
randomly sampled polynomials of degree $27$ with leading coefficient $-1$ has rank $\ge9$. For $n$ not a power of
$p$ the family $P^{\mathrm{full}}_{q,n}$ need not be optimal: for $q=3$ the maximal rank in degree $\le9$ is $3$, the
rank of $-(\theta^{9}+\theta)$, but in degree $81$ the polynomial $\theta-\theta^{3}-\theta^{27}-\theta^{81}$ has
rank $12$, whereas $-(\theta^{81}+\theta)$ has rank $6$.

\emph{Computations related to (2).} For $q=3$ and $e=2$ the degree must be even. Over all nonzero $P$ of degree
$m\in\{2,4,\dots,14\}$ (about $10^{7}$ polynomials for $m=14$), the maximal analytic rank of
$\Cm^{\otimes2}_P$, computed exactly for $t$ transcendental, is $1$ for $m\le6$ and $2$ for $8\le m\le14$; it is
attained for instance by $1+\theta^{4}+\theta^{10}$. For comparison, the twist of $\Cm^{\otimes5}$ by
$-(\theta^{N}+\theta)$, $N=3^{n}$, has analytic rank $1$ for $n=1,2,3$, while for $\Cm^{\otimes1}$ and
$\Cm^{\otimes3}$ the ranks are $2$, $3$, $8$.

\section*{Acknowledgements}

The computations reported here, and the arguments of Sections \ref{sec:graph} and
\ref{sec:general}--\ref{sec:tensor}, were developed in collaboration with Claude (Anthropic). All statements have been verified against the
published tables of \cite{GL16}, in particular Table 6.7 and Proposition 6.1.

\end{document}